\documentclass[11pt, english]{article}

\usepackage[margin= 2 cm,bottom=21mm, top= 20mm]{geometry}

\usepackage{thm-restate}
\usepackage[dvipsnames]{xcolor}
\usepackage[square,sort,comma,numbers]{natbib}
\usepackage{amsthm}
\usepackage{amsmath}
\usepackage{amssymb}
\usepackage{mathrsfs}
\usepackage{setspace}
\usepackage{mathtools}
\usepackage{graphicx}
\graphicspath{ {./images/} }
\usepackage[hidelinks]{hyperref}
\usepackage{cleveref}
\usepackage{hyperref}
\usepackage{enumitem}
\usepackage{framed}
\usepackage{subcaption}
\usepackage{xspace}
\usepackage{thmtools} 
\usepackage{thm-restate}
\usepackage{tikz-feynman}
\setlength{\bibsep}{1 pt plus 20 ex}

\usepackage{thmtools}
\usepackage{thm-restate}
\usepackage{ifthen} 
\usepackage{floatrow}
\usepackage{comment}

\usepackage{tikz}
\usepackage{mathdots}
\usepackage{xcolor}
\usepackage{diagbox}
\usepackage{colortbl}
\usepackage[absolute,overlay]{textpos}

\usetikzlibrary{shapes.misc}
\usetikzlibrary{decorations.pathmorphing}

\tikzset{snake it/.style={decorate, decoration=snake}}
\usetikzlibrary{math}

\usetikzlibrary{calc}
\usetikzlibrary{decorations.pathreplacing}
\usetikzlibrary{positioning,patterns}
\usetikzlibrary{arrows,shapes,positioning}
\usetikzlibrary{decorations.markings}

\tikzstyle{edge}=[very thick]
\definecolor{bostonuniversityred}{rgb}{0.8, 0.0, 0.0}
\definecolor{arsenic}{rgb}{0.23, 0.27, 0.29}
\tikzstyle{diredge}=[postaction={decorate,decoration={markings,
		mark=at position .95 with {\arrow[scale = 1]{stealth};}}}]
\tikzset{
    arrow/.style={decoration={markings, mark=at position 0.7 with
    {\fill(-0.09*#1,-0.03*#1) -- (0,0) -- (-0.09*#1,0.03*#1) -- cycle;}}, postaction={decorate}},
    arrow/.default=1
}
\tikzset{
    arow/.style={decoration={markings, mark=at position 1 with
    {\fill(-0.09*#1,-0.03*#1) -- (0,0) -- (-0.09*#1,0.03*#1) -- cycle;}}, postaction={decorate}},
    arow/.default=1
}
\tikzset{
    arrrow/.style={decoration={markings, mark=at position 0.9 with
    {\fill(-0.09*#1,-0.03*#1) -- (0,0) -- (-0.09*#1,0.03*#1) -- cycle;}}, postaction={decorate}},
    arow/.default=1
}


\newcommand{\cB}{\ensuremath{\mathcal B}}

\newcommand{\cD}{\ensuremath{\mathcal D}}

\newcommand{\cF}{\ensuremath{\mathcal F}}

\newcommand{\cR}{\ensuremath{\mathcal R}}
\newcommand{\cS}{\ensuremath{\mathcal S}}

\newcommand{\fitellipsis}[2] 
{\draw [fill=white]let \p1=(#1), \p2=(#2), \n1={atan2(\y2-\y1,\x2-\x1)}, \n2={veclen(\y2-\y1,\x2-\x1)}
    in ($ (\p1)!0.5!(\p2) $) ellipse [ x radius=\n2/2+0cm, y radius=1.1cm, rotate=\n1];
}
\newcommand{\Fitellipsis}[2] 
{\draw [fill=white]let \p1=(#1), \p2=(#2), \n1={atan2(\y2-\y1,\x2-\x1)}, \n2={veclen(\y2-\y1,\x2-\x1)}
    in ($ (\p1)!0.5!(\p2) $) ellipse [ x radius=\n2/2+0cm, y radius=1.4cm, rotate=\n1];
}

\floatsetup{ 
  heightadjust=object,
  valign=c
}

\setlength{\parskip}{0pt}
\setlength{\parindent}{15pt}

\addtolength{\intextsep}{6pt} 
\addtolength{\abovecaptionskip}{10pt}
\addtolength{\belowcaptionskip}{-5pt}
\captionsetup{width=0.8\textwidth, labelfont=bf, parskip=5pt}

\setstretch{1.05}

\theoremstyle{plain}

\newtheorem*{thm*}{Theorem}
\newtheorem{thm}{Theorem}[section]
\Crefname{thm}{Theorem}{Theorems}

\newtheorem*{lem*}{Lemma}
\newtheorem{lem}[thm]{Lemma}
\Crefname{lem}{Lemma}{Lemmas}

\newtheorem*{claim*}{Claim}
\newtheorem{claim}[thm]{Claim}
\crefname{claim}{Claim}{Claims}
\Crefname{claim}{Claim}{Claims}

\newtheorem{prop}[thm]{Proposition}
\Crefname{prop}{Proposition}{Propositions}

\Crefname{remar}{Remark}{Remarks}

\newtheorem{cor}[thm]{Corollary}
\crefname{cor}{Corollary}{Corollaries}

\newtheorem*{conj*}{Conjecture}

\crefname{conj}{Conjecture}{Conjectures}

\Crefname{qn}{Question}{Questions}

\Crefname{obs}{Observation}{Observations}

\Crefname{ex}{Example}{Examples}

\theoremstyle{definition}

\Crefname{prob}{Problem}{Problems}

\newtheorem{defn}[thm]{Definition}
\Crefname{defn}{Definition}{Definitions}

\theoremstyle{remark}

\captionsetup{width=0.8\textwidth, labelfont=bf, parskip=5pt}

\renewenvironment{proof}[1][]{\begin{trivlist}
\item[\hspace{\labelsep}{\bf\noindent Proof#1.\/}] }{\qed\end{trivlist}}

\newcommand{\remove}[1]{}

\newcommand{\eps}{\varepsilon}

\title{\vspace{-0.85 cm}
Size-Ramsey numbers of structurally sparse graphs}
\date{}

\author{
Nemanja Dragani\'c\thanks{
Department of Mathematics, ETH, Z\"urich, Switzerland. Research supported in part by SNSF grant 200021\_196965.
\newline
\emph{Emails}: \textbf{\{nemanja.draganic,david.munhacanascorreia\}@math.ethz.ch}.
}
\and
Marc Kaufmann\thanks{Institute of Theoretical Computer Science, Department of Computer Science, ETH, Z\"urich, Switzerland. M.K. was supported by SNSF grant 200021\_192079. K.P. was supported by grant no. CRSII5 173721 of the Swiss National Science Foundation. R.S. was supported by an ETH Z\"{u}rich Postdoctoral Fellowship.
\newline
\emph{Emails}: \textbf{\{marc.kaufmann,kalina.petrova,raphaelmario.steiner\}@inf.ethz.ch}.
}
\and
David Munh\'a Correia\footnotemark[1] 
\and
Kalina Petrova\footnotemark[2]
\and Raphael Steiner\footnotemark[2]
}
\begin{document} 
\maketitle
\begin{abstract}
Size-Ramsey numbers are a central notion in combinatorics and have been widely studied since their introduction by Erd\H{o}s, Faudree, Rousseau and Schelp in 1978. Research has mainly focused on the size-Ramsey numbers of $n$-vertex graphs with constant maximum degree $\Delta$. For example, graphs which also have constant treewidth are known to have linear size-Ramsey numbers. On the other extreme, the canonical examples of graphs of unbounded treewidth are the grid graphs, for which the best known bound has only very recently been improved from $O(n^{3/2})$ to $O(n^{5/4})$ by Conlon, Nenadov and Truji\'c. In this paper, we study a common generalization of these problems and establish new bounds on the size-Ramsey numbers in terms of treewidth (which may grow as a function of $n$). As a special case, this yields a bound of $\tilde{O}(n^{3/2 - 1/2\Delta})$ for proper minor-closed classes of graphs. In particular, this bound applies to planar graphs, addressing a question of Kamcev, Liebenau, Wood and Yepremyan. 
Our proof combines methods from structural graph theory and classic Ramsey-theoretic embedding techniques, taking advantage of the product structure exhibited by graphs with bounded treewidth.

\end{abstract}
\section{Introduction}
The famous theorem of Ramsey states that for every positive integer $n$, there exists a finite number $N$ such that every complete graph on $N$ vertices whose edges are colored in red or in blue contains a monochromatic clique of order $n$ \cite{ramsey1929logic}. Motivated by this, we say that a \emph{host} graph $G$ is $k$-Ramsey for a graph $H$ if any $k$-coloring of the edges of $G$ yields a monochromatic copy of $H$, and we write $G\xrightarrow{k}H$.

Given a graph $H$, its size-Ramsey number $\hat{r}^k(H)$ is defined as the smallest number of edges in a graph $G$ such that $G\xrightarrow{k}H$. This notion measures the minimality of the host graph $G$ in a precise manner, and was introduced in 1978 by Erd\H{o}s, Faudree, Rousseau and Schelp \cite{erdös1978size}. Since then, it has been extensively studied and in particular in recent years it has gained a lot of attention.

Intuitively, sparse graphs are a good candidate for having low size-Ramsey number, but already double stars --- trees containing only two non-leaf vertices, each of degree linear in the size of the graph --- have quadratic size-Ramsey number~\cite{erdös1978size}. Thus, a natural class of graphs to study in this context is the class of bounded-degree graphs. Among the first instances of this was the result of  Beck \cite{beck1983pathsI} who proved that for paths it holds that $\hat{r}(P_n)\le 900 n$, answering a $\$100$-dollar question of Erd\H{o}s.
Furthermore, Friedman and Pippenger \cite{Friedman1987ExpandingGC} proved that bounded-degree trees have linear size-Ramsey numbers (i.e., bounded from above by a constant times their order). More recently, it has been shown that graphs with bounded degree and bounded treewidth exhibit the same behaviour~\cite{kamcev2021bounded,berger2019size}, as well as \emph{long} subdivisions of bounded-degree graphs~\cite{draganic2022rolling}.\par

Perhaps surprisingly, not all bounded-degree graphs have this behaviour. In fact, there exist graphs with maximum degree three which serve as counterexamples, as shown by R{\"o}dl and Szemer\'edi \cite{rödl2000bounded}. More concretely, they constructed cubic graphs $H$ on $n$ vertices fulfilling $\hat{r}(H) \ge c \cdot n \cdot \log^{\frac{1}{60}}(n) $ for a universal constant $c>0$. Their lower bound has been improved very recently to $\hat{r}(H) \ge c \cdot n \cdot e^{c \sqrt{\log(n)}} $  by Tikhomirov \cite{tikhomirov2022large}, modifying their construction by a clever randomization trick. In spite of these advances, the conjecture of R{\"o}dl and Szemer\'edi that there exist bounded-degree $n$-vertex graphs $H$ with $\hat{r}(H)\ge n^{1+\varepsilon}$ for some absolute constant $\varepsilon>0$ remains open. 

Turning to general upper bounds, Kohayakawa, Schacht, R{\"o}dl and Szemer\'edi~\cite{kohayakawa2011sparse} showed that for graphs $H$ which have maximum degree $\Delta$ it holds that $\hat{r}(H)= O(n^{2-1/\Delta}\log^{1/\Delta}n)$, leaving a wide gap between the best known upper and lower bounds. Recently, Allen and B\"ottcher~\cite{allen2022partition} announced an improvement of this bound to $\hat{r}(H) = O(n^{2 -1/(\Delta-1)+o(1)})$ for $\Delta \geq 4$. For the case $\Delta=3$, the first and fourth authors~\cite{draganic2022size} showed that $\hat{r}(H)= O(n^{3/2+o(1)})$.
As mentioned above, the class of bounded-degree graphs with constant treewidth is far better understood, and in fact it is known that the size-Ramsey numbers of these graphs are linear in $n$~\cite{berger2019size, kamcev2021bounded}. Unfortunately, the proofs in both \cite{berger2019size} and \cite{kamcev2021bounded} do not generalize beyond the setting of constant treewidth. In particular, if one allows the treewidth of the considered graphs $H$ to grow with $n$, then thus far no better upper bounds on $\hat{r}(H)$ than the bound of Kohayakawa et al.~(which only takes into account degree bounds, and not the structure of the graphs) were known~\cite{kohayakawa2011sparse} .  In this paper, we generalize the above-mentioned results to graph classes of unbounded treewidth. In particular, leveraging the treewidth gives a substantial improvement over the bound of Kohayakawa et al.

\begin{restatable}{thm}{Treewidth-ramsey}\label{thm:main-treewidth}
 Let $k\in \mathbb N$ and let $H$ be an $n$-vertex graph of constant maximum degree $\Delta$ and treewidth $t=t(n)$. Then 
 \begin{itemize}
     \item $\hat{r}_k(H)=O(nt\log n)$
     \item  Furthermore, if $t=\Omega(e^{\sqrt{\log n}})$ then we have\footnote{We use $\tilde O(f(n))$ to denote all functions which are in $O(f(n)(\log n)^C)$ for some absolute constant $C>0$.} $\hat{r}_k(H)=\tilde O(nt^{1-1/\Delta})$.
 \end{itemize}
\end{restatable}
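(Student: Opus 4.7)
The plan is to take the host graph $G$ to be a suitably pseudorandom sparse graph --- concretely, a random graph $G(N,p)$ with $N = \Theta(n)$ vertices and $p$ chosen so that $\binom{N}{2}p$ matches the target edge count --- and to verify that with positive probability every $k$-edge-coloring of $G$ yields a monochromatic copy of $H$. A standard pigeonhole reduces this to embedding $H$ into an arbitrary subgraph $G' \subseteq G$ which inherits a $1/k$ fraction of the edges, so the task becomes embedding $H$ into a sufficiently ``typical'' sparse subgraph of a random graph.

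To perform the embedding I exploit the product structure of bounded-treewidth graphs: every graph of treewidth $t$ is a subgraph of $T \boxtimes K_{t+1}$ for some tree $T$, equivalently, it admits a rooted tree decomposition with bags of size $\leq t+1$. From this one extracts an ordering $v_1, \dots, v_n$ of $V(H)$ in which (i) each $v_i$ has at most $\Delta$ already-embedded neighbors (by the maximum-degree hypothesis) and (ii) those back-neighbors all lie in a common ``active bag'' of size $\leq t$. I then embed greedily: maintaining a candidate set $C_w \subseteq V(G')$ for each not-yet-embedded $w$, at step $i$ I pick the image of $v_i$ inside the $G'$-common neighborhood of the images of its at most $\Delta$ already-embedded neighbors, updating the candidate sets accordingly. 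Feasibility of the greedy step reduces to a lower bound on common neighborhoods of $\le \Delta$-sets in $G'$, and the tree decomposition helps because the step only needs closing up edges to $\Delta$ specific previously-embedded vertices, not to all $t$ members of the active bag.

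For the first bound $O(nt\log n)$, taking $p = \Theta((t \log n)/n)$ makes $\binom{N}{2}p = O(nt\log n)$, and a union bound over the $n$ embedding steps goes through because at each stage the typical common neighborhood has polylog-many candidates to spare. For the sharper bound $\tilde O(n t^{1-1/\Delta})$ one takes $p = \tilde\Theta(t^{1-1/\Delta}/n)$ and replaces the vanilla greedy analysis with a dependent-random-choice step: one first extracts a ``reservoir'' inside $V(G')$ in which every $\Delta$-tuple has a large common neighborhood, and then carries out the bag-by-bag embedding inside nested reservoirs so that the $N p^\Delta$-sized typical common neighborhoods of $\Delta$-sets in the reservoir can absorb an entire active bag of size $\Theta(t)$. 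The main obstacle is precisely this second part: one must calibrate the DRC so the reservoir remains valid across an entire active bag while paying only polylog losses, and must combine this with the greedy embedding without blowing up the failure probability. This calibration is also what forces the hypothesis $t = \Omega(e^{\sqrt{\log n}})$, since below that threshold the polylog overheads from DRC swamp the intended $t^{1-1/\Delta}$ saving and the simpler first bound is already stronger.
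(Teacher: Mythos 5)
Your plan has a fatal quantitative flaw at its very first step: the choice of host graph. A uniformly random $G(N,p)$ with $N=\Theta(n)$ and $p=\Theta(t\log n/n)$ does not support the embedding you describe, because the expected common neighbourhood of a $\Delta$-set is $Np^{\Delta}=\Theta\bigl(n\cdot(t\log n/n)^{\Delta}\bigr)$, which tends to $0$ whenever $t\ll n^{1-1/\Delta}$; the same computation kills the second bound with $p=\tilde\Theta(t^{1-1/\Delta}/n)$. So typical $\Delta$-sets have \emph{empty} common neighbourhood, the greedy step has no candidates, and dependent random choice cannot manufacture a reservoir of $\Delta$-tuples with large common neighbourhoods out of a graph in which almost no $\Delta$-tuple has any common neighbour at all. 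Concretely, for $\Delta=2$ and $t=O(1)$ your host is $G(n,\Theta(\log n/n))$, which contains only $O(\log^3 n)$ triangles in total and hence is not even universal (let alone Ramsey) for a disjoint union of $n/3$ triangles, a graph of maximum degree $2$ and treewidth $2$. The resolution in the paper is that the host graph must be \emph{structured}: it is a blow-up $\cR\boxtimes K_{Cs}$ (respectively a $p$-random blow-up with $p=C(\log s/s)^{1/\Delta}$ measured against parts of size $\Theta(s)$, $s=\Theta(t\log n)$) of a bounded-degree graph $\cR$ that is itself $k$-Ramsey for $T\boxtimes K_{O(1)}$. The edges are concentrated between parts corresponding to adjacent vertices of $\cR$, so pair densities are large enough for candidate sets to survive $\Delta$ exposures, while the bounded degree of $\cR$ keeps the total edge count at $O(\tau s^2)$ resp.\ $\tilde O(nt^{1-1/\Delta})$.

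Two further gaps. First, the reduction ``pigeonhole to a subgraph $G'$ with a $1/k$ fraction of the edges'' is not a valid Ramsey reduction: an adversarial colouring can make the majority colour class arbitrarily non-uniform, and embedding a near-spanning bounded-degree graph needs much more than global density. The paper instead applies a sparse-regularity statement (Lemma~\ref{lem:get-regularity}) to extract, inside every colouring, subsets $U_x\subseteq V_x$ between which some single colour is $(\eps,\frac{1}{2k},p)$-dense, and then uses the Ramsey property of $\cR$ for $T\boxtimes K_{O(1)}$ to make all these dense pairs monochromatic in the \emph{same} colour along a copy of the tree product. Second, your structural input ``$H\subseteq T\boxtimes K_{t+1}$ for some tree $T$'' is too weak: for the host-graph construction one needs $T$ to have \emph{constant} maximum degree and only $O(n/s)$ vertices, which is exactly what Lemma~\ref{lem:embedintoproduct} provides (at the cost of the $\log n$ factor) via iterated balanced separators combined with necklace splitting; the off-the-shelf Ding--Oporowski/Wood embedding does not control $\Delta(T)$. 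Your observation that the hypothesis $t=\Omega(e^{\sqrt{\log n}})$ comes from balancing polylogarithmic losses in the sparse regime is in the right spirit, but the losses arise from the failure probabilities in the denseness/congestion properties of the random blow-up, not from a DRC calibration.
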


 Treewidth is a key parameter in structural graph theory and has served as the watershed in Robertson and Seymour's seminal sequence of  papers on graph minors, culminating in what is now known as the Graph Minor Theorem. Tree decompositions, which are inherently related to treewidth, have enabled advances in algorithmic graph theory, with many NP-hard problems proving solvable in polynomial time for entire graph classes whose treewidth is known to be bounded.

 In fact, Theorem~\ref{thm:main-treewidth} is a special case of more general results, namely \Cref{lem:smallblowupsizeramsey} and \Cref{lem:blowupsizeramsey}, which we prove in Section \ref{sec:proof}. Indeed, there we show that any bounded-degree graph contained in a product of a bounded-degree tree and a clique of certain size has small size-Ramsey number. Since (as we will prove in Section~\ref{sec:planar}) graphs of bounded degree and treewidth $t$ are contained in such a product with certain parameters, Theorem~\ref{thm:main-treewidth} will follow.
 


As the treewidth of many graph classes is well understood, Theorem~\ref{thm:main-treewidth} may provide good upper bounds for the size-Ramsey numbers of graphs in those classes. One such example are proper minor-closed classes of graphs, for which it is known~\cite{alon1990balancedsep,dvorak2019balancedsep} that they have treewidth at most $O(\sqrt{n})$, and hence we have the following consequence of Theorem~\ref{thm:main-treewidth}.

\begin{thm}\label{thm:main}
Let $k \in \mathbb{N}$ and let $\mathcal{G}$ be a proper minor-closed class. Then for every $n$-vertex graph $G\in \mathcal{G}$ of maximum degree $\Delta$, we have $$\hat{r}_k(G)=\tilde O(n^{3/2-\frac{1}{2\Delta}}).$$
\end{thm}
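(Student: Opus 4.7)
The plan is to deduce Theorem~\ref{thm:main} as a direct consequence of Theorem~\ref{thm:main-treewidth} together with the classical fact that graphs in a proper minor-closed class admit small treewidth. Specifically, by the Alon--Seymour--Thomas theorem~\cite{alon1990balancedsep} (see also~\cite{dvorak2019balancedsep}), every $n$-vertex graph $G$ in a proper minor-closed class $\mathcal{G}$ satisfies $\mathrm{tw}(G) = O(\sqrt{n})$, where the hidden constant depends only on the excluded minor of $\mathcal{G}$. This is the only external ingredient required beyond the results already stated in the excerpt, and it is explicitly flagged just before the theorem statement.

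Given such a $G$ of maximum degree $\Delta$, I would set $t := \mathrm{tw}(G) = O(\sqrt{n})$ and split into two cases depending on the size of $t$. If $t \geq e^{\sqrt{\log n}}$, then the second bullet of Theorem~\ref{thm:main-treewidth} applies and gives
\[
\hat{r}_k(G) \;=\; \tilde{O}\bigl(n \cdot t^{1-1/\Delta}\bigr) \;=\; \tilde{O}\bigl(n \cdot n^{(1-1/\Delta)/2}\bigr) \;=\; \tilde{O}\bigl(n^{3/2 - 1/(2\Delta)}\bigr),
\]
which is exactly the target bound. If instead $t < e^{\sqrt{\log n}}$, the first bullet of Theorem~\ref{thm:main-treewidth} yields $\hat{r}_k(G) = O(nt\log n) \leq n \cdot e^{O(\sqrt{\log n})} = n^{1+o(1)}$, which is trivially absorbed by $\tilde{O}(n^{3/2 - 1/(2\Delta)})$ for $\Delta \geq 2$; the $\Delta = 1$ case is immediate since $G$ is then a matching and the first bullet already gives $\tilde{O}(n)$.

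There is no serious obstacle here: all of the substantive combinatorial work is packaged into Theorem~\ref{thm:main-treewidth}, and the only point to get right is that the threshold $t = e^{\sqrt{\log n}}$ of the improved bullet is small enough that the complementary range $t < e^{\sqrt{\log n}}$ is trivially handled by the basic bullet. Conceptually, the exponent $3/2 - 1/(2\Delta)$ is inherited directly from the $\sqrt{n}$ treewidth bound for proper minor-closed classes: any polynomial treewidth estimate of the form $t = O(n^{\alpha})$ would feed into the same computation to produce a size-Ramsey upper bound of $\tilde{O}(n^{1 + \alpha(1 - 1/\Delta)})$, and substituting $\alpha = 1/2$ yields precisely the exponent in Theorem~\ref{thm:main}.
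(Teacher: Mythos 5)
Your proposal is correct and follows essentially the same route as the paper, which likewise derives Theorem~\ref{thm:main} by combining the $O(\sqrt{n})$ treewidth bound for proper minor-closed classes \cite{alon1990balancedsep,dvorak2019balancedsep} with Theorem~\ref{thm:main-treewidth}. Your explicit case split on whether $t \geq e^{\sqrt{\log n}}$ is a careful (and correct) way of handling the hypothesis of the second bullet, which the paper leaves implicit.
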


A natural and well studied class of minor-closed graphs are planar graphs, and hence the bound above also applies to them. This addresses a question raised by Kam\v{c}ev, Liebenau, Wood and Yepremyan~\cite{kamcev2021bounded}, who asked for general bounds for bounded-degree planar graphs. Until recently, the best known bound for the two-dimensional grid graph on $n$-vertices was $O(n^{3/2})$, which was improved to $O(n^{5/4})$ by Conlon, Nenadov and Truji\'c~\cite{conlon2022size}. The only other bounded-degree planar graphs which were studied are cycles and bounded-degree trees, which have linear size-Ramsey numbers.

Let us also remark that all our results are of universality type. Namely, in our proofs we construct a host graph such that we can find all the graphs in the relevant class within the same color class. Results of this type were also obtained for size-Ramsey numbers of graphs in other classes \cite{Friedman1987ExpandingGC, draganic2021size,draganic2022rolling, kohayakawa2019size, kohayakawa2011sparse,lee2017ramsey}.

\textbf{Organization.}
        In Section~\ref{sec:planar} we prove several results which embed a graph with a given bound on its treewidth into the product of a bounded-degree tree and a clique. The results in this section might be of independent interest. In Section~\ref{sec:proof} we prove bounds on the size-Ramsey numbers of bounded-degree subgraphs of products of bounded-degree trees and cliques, which, combined with the results from Section~\ref{sec:planar} then allow us to prove our main results. We decided to defer some of the technical details of the proofs from Section~\ref{sec:proof} to the appendix (Section~\ref{sec:appendix}) to ease the readability and intuitive understanding of our proofs. 

\textbf{Notation.}
	For simplicity, we employ the following conventions. We omit rounding of real numbers to nearest integers whenever it is not of vital importance. For two constants $a,b$, we use $a \ll b$ to indicate that $b$ is sufficiently large as a function of $a$ for our proofs to work out. For example, we often use inequality chains like $a \gg b \gg c \gg d$, which also implies that in particular $a \gg bcd$. We make use of the \emph{strong product} operator, which is defined as follows. The strong product $G \boxtimes H$ of graphs $G$ and $H$ is the graph with vertex set $V(G) \times V(H) $ such that for $v_1, v_2 \in V(G)$ and $u_1,u_2 \in V(H)$, we have that $(v_1, u_1)$ is adjacent to $(v_2, u_2)$ in $G \boxtimes H$ if $v_1 = v_2$ and $\{u_1,u_2\} \in E(H)$, or $\{v_1,v_2\} \in E(G)$ and $u_1 = u_2$, or $\{v_1,v_2\} \in E(G)$ and $\{u_1,u_2\} \in E(H)$.
\section{Structural results}\label{sec:planar}
In this section, we prove the announced structural lemma for graphs of bounded treewidth. It says that every bounded-degree $n$-vertex graph $G$ of treewidth at most $t$ can be embedded as a subgraph into the strong product of a bounded-degree tree of size $O(n/(t\log n))$ and a clique of size $O(t\log n)$. In the case that $G$ belongs to a structurally sparse class of graphs, such as planar graphs or a proper minor-closed class, we can get rid of the $\log n$ factor in the above estimate, which can be used to further improve Theorem~\ref{thm:main} by removing additional hidden logarithmic factors. Evidently, the significance of this result does not lie in this minor improvement, but rather in the fact that it is tight (consider, say, the grid graph).

\begin{lem}\label{lem:embedintoproduct}
 Let $\Delta >0$. There exists $d>0$ depending solely on $\Delta$ such that the following hold.
\begin{enumerate}
\item[(i)]\label{i:first} For every $n$-vertex graph $G$ of maximum degree at most $\Delta$ and treewidth at most $t$ there exists a tree $T$ such that $G \subseteq T \boxtimes K_s$, where $n \geq s=\Theta_\Delta(t \log n)$, $v(T) = O_{\Delta}\big(\frac{n}{s}\big)$ and $\Delta(T)\le d$. 
\item[(ii)] Let $\mathcal{G}$ be a class of graphs closed under taking subgraphs, and let $t:\mathbb{R}_+\rightarrow \mathbb{R}_+$ be an increasing function in $\Omega(\log x)$ such that every $n$-vertex graph $G \in \mathcal{G}$ has treewidth at most $t(n)$. Suppose further that there exists a constant $\alpha>0$ such that $t(\lambda x)\le \lambda^\alpha t(x)$ holds for every $\lambda,x>0$. Then for every $n$-vertex graph $G \in \mathcal{G}$ of maximum degree at most $\Delta$ there is a tree $T$ such that $G \subseteq T \boxtimes K_s$, where $n \geq s=\Theta_{\Delta,\alpha}(t(n))$, $v(T) = O_{\Delta}\big(\frac{n}{s}\big)$ and $\Delta(T)\le d$. 
\end{enumerate}
\end{lem}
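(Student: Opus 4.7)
The plan is to prove (i) by a recursive balanced-separator construction; (ii) will follow from the same scheme, using the sharper treewidth function $t(n)$ along with the growth assumption $t(\lambda x)\le\lambda^{\alpha}t(x)$ to eliminate the logarithmic overhead. Setting $s=\Theta_\Delta(t\log n)$, the goal is to produce a tree partition of $G$ into bags of size $\le s$ indexed by a bounded-degree tree $T$ of size $O_\Delta(n/s)$; such a tree partition is equivalent to an embedding $G\subseteq T\boxtimes K_s$ via the identification of bags with columns of the product.

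The starting point is the classical fact that any graph of treewidth $\le t$ admits a balanced vertex separator of size $\le t+1$, i.e.\ a set $S$ with $|S|\le t+1$ such that each component of $G-S$ has at most $\tfrac{2}{3}n$ vertices. I would then set up a recursion $\mathrm{Decomp}(H,W)$ that takes as input a subgraph $H\subseteq G$ together with a \emph{boundary set} $W\subseteq V(H)$ (the vertices of $H$ whose $G$-neighbors outside $V(H)$ have already been committed to specific ancestor tree nodes in the outer recursion), finds a balanced separator $S$ of $H$, partitions $V(H)\setminus S$ into balanced parts $A,B$, places $W\cup S$ into the root bag of the current subtree, and recurses on $H[A\setminus W]$ and $H[B\setminus W]$ with appropriately updated boundaries. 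Since the recursion depth is $O(\log(n/s))$ and each recursive call produces two children, the resulting tree $T$ has $O(n/s)$ nodes and is essentially binary, hence bounded-degree.

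The chief difficulty---and the source of the logarithmic factor---is to control the growth of the boundary along the recursion. A naive placement of every boundary vertex at the root of its immediate subtree yields the recurrence $|W_{\ell+1}|\le\Delta(|W_\ell|+t+1)$, which compounds multiplicatively to a polynomial in $n$, far above the target. The key idea to avoid this is to distribute boundary vertices across multiple tree nodes rather than stacking them all at a single root: each boundary vertex is placed at the tree node corresponding to the earliest ancestor separator to which it is adjacent, so that the ``active'' boundary accumulated along any root-to-leaf route of the recursion grows only additively, contributing $O(\Delta t)$ per level. Summing over the $O(\log n)$ levels of recursion gives a uniform bag size of $O(\Delta t\log n)=\Theta_\Delta(t\log n)$, with all $\Delta$-dependencies absorbed into the hidden constant. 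The tree $T$ is accordingly slightly enlarged with short auxiliary structures at each internal node to host the distributed boundary, while still preserving bounded degree $d=d(\Delta)$ and overall size $O_\Delta(n/s)$. Part (ii) then follows by rerunning the same construction with $t(n)$ in place of $t$: the growth assumption $t(\lambda x)\le\lambda^\alpha t(x)$ turns the telescoping sum over recursion levels into a geometric one, so the final bag size becomes $\Theta_{\Delta,\alpha}(t(n))$ rather than $\Theta_\Delta(t(n)\log n)$.
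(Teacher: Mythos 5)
Your overall architecture (recursive balanced separators, a binary recursion tree, and the recognition that the whole difficulty lies in controlling the boundary) matches the paper, and your accounting of where the $\log n$ factor comes from in (i) and why the growth condition $t(\lambda x)\le\lambda^{\alpha}t(x)$ removes it in (ii) is essentially correct. But the mechanism you propose for taming the boundary has a genuine gap. A plain balanced separator of $H$ is balanced only with respect to $|V(H)|$; it gives no control whatsoever over how the boundary set $W$ (or $N(S)$) splits between $A$ and $B$, so the entire boundary can land on one side and your claim that it ``grows only additively, contributing $O(\Delta t)$ per level'' is unjustified --- you are back at the multiplicative recurrence you set out to avoid. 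The missing ingredient is a \emph{colored} separator theorem (Corollary~\ref{cor:coloredseparation} in the paper, obtained by combining a separator-respecting linear ordering of $V(G)$ from Lemma~\ref{lem:ordering} with the Goldberg--West necklace-splitting theorem), which produces a separator of size $O\bigl(k\sum_j t((2/3)^j n)\bigr)$ that simultaneously halves each of $k$ prescribed vertex classes; this forced halving is what makes the boundary shrink geometrically down the recursion.

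Second, your rule of placing each boundary vertex at the earliest ancestor separator to which it is adjacent does not close the argument: once a vertex $v$ is pulled up into an ancestor bag $X_{u'}$, every neighbour of $v$ in the remaining graph must itself end up within bounded tree-distance of $u'$, but such a neighbour need not be adjacent to any ancestor separator and can survive arbitrarily many further levels of recursion --- the second-order neighbours cascade and the edge condition for $G\subseteq T\boxtimes K_s$ fails. The paper never relocates vertices upward; instead it tracks, for the bag $X$ created at each node, the set $N_G(X)$ (of size at most $2\Delta s$) as a color class, halves it at every subsequent level using the colored separator, and after $k=\lceil\log_2\Delta\rceil+2$ levels absorbs the at most $s$ survivors into the bag created there. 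This guarantees that adjacent vertices of $G$ lie in bags at tree-distance at most $k$, after which one passes from $T^k$ to a bounded-degree tree via $T^k\subseteq T'\boxtimes K_{2^k}$ --- a step your asserted equivalence between tree partitions and containment in $T\boxtimes K_s$ also quietly needs, since adjacent vertices only end up in bags at distance at most $k$, not in identical or adjacent bags. Without the even-splitting ingredient and this absorb-after-$k$-levels bookkeeping, the construction does not go through.
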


We remark that a very similar embedding lemma, without the $\log n$ factor, has been proved before by Ding and Oporowski~\cite{ding1995treedecomp} and later on improved by Wood~\cite{wood2009treedecomp}. These results state that every graph $G$ of maximum-degree $\Delta$ and treewidth at most $t$ is contained in $T \boxtimes K_s$, where $T$ is some tree and $s= O(\Delta t)$. However, for our purposes, it is very important to be able to carefully control both the size and the maximum degree of the tree $T$. In particular, we need $T$ to have constant degree, and this is not the case in the results in~\cite{ding1995treedecomp, wood2009treedecomp}. Motivated by Lemma~\ref{lem:embedintoproduct}, very recently Distel and Wood~\cite{distel2022tree} showed a strengthening of it by removing the logarithmic factor in  the first part of Lemma~\ref{lem:embedintoproduct}. 


Before moving on to the proof of Lemma~\ref{lem:embedintoproduct}, let us note the following immediate consequence of it. 

\begin{cor}\label{cor:planarandminorclosed}
    Let $\Delta>0$ and let $\mathcal{G}$ be a proper minor-closed class. There exist constants $c,d>0$ such that every $n$-vertex graph $G \in \mathcal{G}$ of maximum degree at most $\Delta$ is a subgraph of $T \boxtimes K_s$, where $s \le c\sqrt{n}$ and $T$ is a tree with $v(T) \le c\sqrt{n}$ and $\Delta(T) \le d$.
\end{cor}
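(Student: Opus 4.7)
The plan is to derive this as a direct consequence of part (ii) of \Cref{lem:embedintoproduct} applied to the class $\mathcal{G}$, so the main task is to verify that the hypotheses of that lemma are satisfied for proper minor-closed classes.

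First, I would recall that a proper minor-closed class $\mathcal{G}$ is automatically closed under taking subgraphs, since any subgraph of a graph $H$ is in particular a minor of $H$. Next, I would invoke the classical balanced-separator results for proper minor-closed classes of Alon--Seymour--Thomas~\cite{alon1990balancedsep} and Dvo\v{r}\'ak--Norin~\cite{dvorak2019balancedsep}, which imply that there exists a constant $c'=c'(\mathcal{G})>0$ such that every $n$-vertex graph in $\mathcal{G}$ has treewidth at most $t(n):=c'\sqrt{n}$. (This is exactly the fact alluded to just above \Cref{thm:main} in the introduction.)

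The next step is a quick check that this $t$ meets the analytic hypotheses of \Cref{lem:embedintoproduct}(ii): the function $x\mapsto c'\sqrt{x}$ is increasing, and since $\sqrt{x}=\omega(\log x)$ it is in $\Omega(\log x)$; moreover $t(\lambda x)=c'\sqrt{\lambda x}=\lambda^{1/2}\,t(x)$, so the scaling hypothesis holds with $\alpha=1/2$. Hence \Cref{lem:embedintoproduct}(ii) applies to every $n$-vertex $G\in\mathcal{G}$ of maximum degree at most $\Delta$, and yields a tree $T$ with $\Delta(T)\le d$ and an integer $s=\Theta_{\Delta,\alpha}(t(n))=\Theta_{\Delta}(\sqrt{n})$ such that $G\subseteq T\boxtimes K_s$, together with $v(T)=O_\Delta(n/s)=O_\Delta(\sqrt{n})$.

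Finally, I would choose the constant $c>0$ large enough so that both $s\le c\sqrt{n}$ and $v(T)\le c\sqrt{n}$ hold, which is possible because the implicit constants in the $\Theta_\Delta$ and $O_\Delta$ bounds depend only on $\Delta$ and on the class $\mathcal{G}$. The constant $d$ is inherited directly from \Cref{lem:embedintoproduct}, and this completes the argument. There is no real obstacle here: the only ``content'' is plugging the square-root treewidth bound for $\mathcal{G}$ into the structural lemma, and the verification of the (mild) hypotheses on $t$.
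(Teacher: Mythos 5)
Your proposal is correct and matches the paper's own proof: both derive the corollary directly from Lemma~\ref{lem:embedintoproduct}(ii) by plugging in the treewidth bound $t(x)=C\sqrt{x}$ for proper minor-closed classes from~\cite{alon1990balancedsep,dvorak2019balancedsep} with $\alpha=\tfrac{1}{2}$. Your additional checks (subgraph-closedness and the $\Omega(\log x)$ and scaling hypotheses) are exactly the routine verifications the paper leaves implicit.
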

\begin{proof}
    It is well-known~\cite{alon1990balancedsep,dvorak2019balancedsep} that for every proper minor-closed class $\mathcal{G}$ of graphs, there is a constant $C>0$ such that every $n$-vertex graph $G\in \mathcal{G}$ has treewidth at most $C\sqrt{n}$. The statement therefore immediately follows from Lemma~\ref{lem:embedintoproduct}~(ii) by setting $t(x)=C\sqrt{x}$ and $\alpha=\frac{1}{2}$. 
\end{proof}

We now move on to the proof of Lemma~\ref{lem:embedintoproduct}, which follows closely the arguments in~\cite{chung1990separator,chung1989universal}. A crucial tool to embed a given graph $G$ of treewidth $t$ into the strong product of a tree and a clique is a repeated application of the following well-known result from structural graph theory concerning the existence of so-called \emph{balanced separators} in graphs of bounded treewidth. 
\begin{lem}[Robertson and Seymour, cf.~statement (2.6) in~\cite{robertson1986balancedsep}]\label{lem:balancedseps}
Let $G$ be a graph of treewidth at most $t$ and order $n$. Then there exists a set $S\subseteq V(G)$ of size $|S|\le t+1$ and a partition of $V(G)\setminus S$ into sets $A$ and $B$ such that no edge in $G-S$ connects a vertex in $A$ to a vertex in $B$ and such that $|A|, |B| \le \frac{2}{3}n$.
\end{lem}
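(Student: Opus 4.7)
I will work with a tree decomposition $(T,\{B_u\}_{u \in V(T)})$ of $G$ of width at most $t$, so every bag has $|B_u|\le t+1$, and exhibit a single bag whose removal splits $G$ into pieces that can be regrouped into the desired sets $A$ and $B$. Root $T$ arbitrarily and, for each node $u$ of $T$, let $T_u$ denote the subtree of $T$ rooted at $u$ and write $V_u := \bigcup_{w \in V(T_u)} B_w$. Starting at the root and descending in $T$, I would repeatedly move to a child whose subtree still satisfies $|V_c|>\tfrac{2}{3}n$ as long as such a child exists. This terminates because the depth strictly increases, and yields a node $v$ with $|V_v|>\tfrac{2}{3}n$ but $|V_c|\le \tfrac{2}{3}n$ for every child $c$ of $v$. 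I would then set $S:=B_v$, which already satisfies $|S|\le t+1$.

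Let $c_1,\dots,c_k$ be the children of $v$ and define the pieces $A_i := V_{c_i}\setminus B_v$ for $1\le i\le k$, together with $P := V(G)\setminus V_v$; these form a partition of $V(G)\setminus S$. The running intersection property of tree decompositions ensures that the bags containing any fixed vertex form a connected subtree of $T$, and from this one checks that a vertex of $V(G)\setminus B_v$ can appear only in bags lying on a single side of $v$ in $T$, so no edge of $G-S$ joins two distinct pieces. This is what makes $B_v$ a valid vertex separator in $G$. The piece sizes are also controlled: each $|A_i|\le |V_{c_i}|\le \tfrac{2}{3}n$ by the choice of $v$, and $|P|=n-|V_v|<\tfrac{1}{3}n$. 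To assemble $A$ and $B$, I would proceed greedily: if some piece has size at least $\tfrac{1}{3}n$, place it alone on one side and lump the remainder on the other; otherwise, add pieces in decreasing size order to $A$ until the running total first reaches $\tfrac{1}{3}n$, and put the rest in $B$. A short case check confirms $|A|,|B|\le \tfrac{2}{3}n$ in either case, since each individual piece has size at most $\tfrac{2}{3}n$ and the total is at most $n$.

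The proof is not technically difficult; the main delicate point is the appeal to the running intersection property to argue that $B_v$ cleanly separates the branches of $T-v$ in $G$, rather than merely being a small set with some incidental balance property. Both the descent locating $v$ and the greedy rebalancing of pieces into $A$ and $B$ are short and routine once this separation is in place.
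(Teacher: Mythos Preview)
Your argument is correct and is essentially the standard proof of this balanced-separator fact via tree decompositions. Note, however, that the paper does not give its own proof of this lemma: it is quoted as a known result of Robertson and Seymour (statement~(2.6) in~\cite{robertson1986balancedsep}) and used as a black box. So there is nothing in the paper to compare your proof against; your write-up simply supplies the standard derivation that the paper omits.
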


To prove Lemma~\ref{lem:embedintoproduct}, we first establish an extension of Lemma~\ref{lem:balancedseps} which works when the graph $G$ comes with a given coloring of its vertices with $k$ colors, and we wish to find a small separator $S$ in the graph such that the two sides $A$ and $B$ separated by it contain each at most half the vertices in each color. To find such separators, we make use of the celebrated necklace-splitting theorem due to Goldberg and West in the following form. 
\begin{thm}[Goldberg and West~\cite{goldberg1985bisection}]\label{thm:neclacesplitting}
For some $R \subseteq \{1,\ldots,n\}$, let $c:R \rightarrow \{1,\ldots,k\}$ be a (partial) $k$-coloring of the first $n$ integers. Then $\{1,\ldots,n\}$ 
 can be partitioned into $k+1$ pairwise disjoint intervals $I_1, \ldots,I_{k+1}$ such that for some $0 \le r \le k+1$ it holds that (denoting $X:=\bigcup_{i=1}^{r}{I_i}, Y:=\bigcup_{i=r+1}^{k+1}{I_i}$) $$\max\{|X \cap c^{-1}(i)|,|Y \cap c^{-1}(i)|\}\le \left\lceil\frac{|c^{-1}(i)|}{2}\right\rceil$$ for $i=1,\ldots,k$. 
\end{thm}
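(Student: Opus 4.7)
The plan is to prove the statement as a discrete instance of the classical necklace-splitting theorem of Alon. I first lift the integer coloring to a continuous measure-theoretic setting, apply the Borsuk--Ulam theorem to obtain an exactly balanced continuous bisection, and then round the cut points back to integers.

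Identify position $j \in \{1,\ldots,n\}$ with the unit interval $[j-1,j] \subseteq [0,n]$, and for each color $i \in \{1,\ldots,k\}$ let $\mu_i$ be Lebesgue measure restricted to $\bigcup_{j \in c^{-1}(i)}[j-1,j]$. This yields $k$ finite Borel measures on $[0,n]$ with $\mu_i([0,n]) = |c^{-1}(i)|$. By the continuous necklace-splitting theorem applied to $\mu_1,\ldots,\mu_k$, there exist cut points $0 = t_0 \le t_1 \le \cdots \le t_k \le t_{k+1} = n$ and signs $\varepsilon_1,\ldots,\varepsilon_{k+1} \in \{+1,-1\}$ such that $\sum_{j=1}^{k+1}\varepsilon_j\,\mu_i([t_{j-1},t_j]) = 0$ for every $i$. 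The continuous statement itself is proved by parametrizing cuts and signs through the sphere $S^k = \{(x_1,\ldots,x_{k+1}) : \sum_j x_j^2 = 1\}$, interpreting $n\,x_j^2$ as the length of the $j$-th piece and $\mathrm{sign}(x_j)$ as the assignment $\varepsilon_j$, and applying Borsuk--Ulam to the odd continuous map $S^k \to \mathbb{R}^k$ whose $i$-th coordinate is the signed total $\mu_i$-mass of the $k+1$ pieces.

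Next, round each continuous cut $t_j$ to an adjacent integer $a_j \in \{0,1,\ldots,n\}$ while preserving the non-decreasing order, and define $I_j' = \{a_{j-1}+1,\ldots,a_j\}$ with $a_0 = 0$ and $a_{k+1} = n$, allowing $I_j' = \emptyset$ when $a_{j-1} = a_j$. To match the prefix structure in the theorem statement, relabel the pieces so that those with $\varepsilon_j = +1$ are listed first as $I_1,\ldots,I_r$ and those with $\varepsilon_j = -1$ are listed next as $I_{r+1},\ldots,I_{k+1}$. Then $X = \bigcup_{j \le r} I_j$ and $Y = \bigcup_{j > r} I_j$ partition $\{1,\ldots,n\}$ in the required form.

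The main technical step is controlling the rounding error. After first perturbing the continuous cuts by an arbitrarily small amount so that no two of them lie in the same open unit interval, each integer position is crossed by at most one cut; consequently, each rounded side-count differs from the exact continuous halfway value $|c^{-1}(i)|/2$ by at most $1/2$ in absolute value. This forces $\max\{|X \cap c^{-1}(i)|,|Y \cap c^{-1}(i)|\} \le \lceil |c^{-1}(i)|/2 \rceil$, which is precisely the asserted bound. The delicate bookkeeping—handling coincident cuts, empty pieces, and the ordering constraint on the $a_j$—is the chief obstacle, but once it is in place the theorem follows.
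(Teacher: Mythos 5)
The paper does not actually prove this statement---it is imported verbatim as a theorem of Goldberg and West---so any proof is necessarily ``a different route.'' Your overall strategy (lift the beads to measures, apply the continuous necklace-splitting theorem via Borsuk--Ulam, then round the cuts to integers) is the standard Alon--West reduction and is sound in outline; the continuous half of your argument is fine.

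The gap is in the rounding step. You assert that after perturbing so that no two cuts lie in the same open unit interval, ``each rounded side-count differs from $|c^{-1}(i)|/2$ by at most $1/2$.'' This does not follow: having at most one cut per unit interval does not prevent several \emph{distinct} cuts from each landing inside a bead of the \emph{same} colour $i$, and if each cut is rounded independently ``to an adjacent integer'' the per-cut errors of up to $1/2$ accumulate, so the colour-$i$ count on one side can drift by as much as $k/2$. Concretely, for $k=2$ both cuts may bisect beads of colour $1$; rounding both toward $X$ adds a full extra bead to $X$'s colour-$1$ count and overshoots $\lceil|c^{-1}(i)|/2\rceil$ when $|c^{-1}(i)|$ is even. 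The standard repair must be stated: for each colour $i$, the cuts splitting beads of colour $i$ contribute fractions $f_1,\dots,f_m$ of those beads to $X$, and exactness of the continuous bisection forces $\sum_j f_j$ to be an integer when $|c^{-1}(i)|$ is even and a half-integer otherwise; one must therefore \emph{coordinate} the rounding directions within each colour class so that exactly $\lfloor\sum_j f_j\rfloor$ or $\lceil\sum_j f_j\rceil$ of these beads go to $X$, which pins both side-counts to $\lfloor|c^{-1}(i)|/2\rfloor$ or $\lceil|c^{-1}(i)|/2\rceil$. Since each cut lies inside a bead of at most one colour, these per-colour choices are independent, and cuts inside uncoloured beads may be rounded arbitrarily. (A secondary issue: two cuts in the same open unit interval cannot in general be separated by an ``arbitrarily small'' perturbation; coincident or clustered cuts should instead be handled directly, e.g.\ by discarding empty pieces.) As written the proof therefore has a genuine gap, though the repair is routine.
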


\begin{lem}\label{lem:ordering}
Let $G$ be a graph and let $t:\mathbb{R}_+\rightarrow \mathbb{R}_+$ be an increasing function such that every subgraph of $G$ with $x$ vertices has treewidth at most $t(x)$. 

Then there exists a linear ordering $v_1,\ldots,v_n$ of the vertices in $G$ and for each $i \in \{1,\ldots,n\}$ a set $S(v_i) \subseteq V(G)$ including $v_i$ such that
$$|S(v_i)|\le \sum_{j=0}^{\lceil \log_{3/2}(n) \rceil}{\left(t\left(\left(\frac{2}{3}\right)^j n\right)+1\right)}$$ and such that there are no edges between $\{v_1,\ldots,v_{i-1}\}\setminus S(v_i)$ and $\{v_{i+1},\ldots,v_n\}\setminus S(v_i)$ in $G$.  
\end{lem}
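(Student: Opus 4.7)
The plan is to prove the lemma by induction on $n = |V(G)|$, using Lemma~\ref{lem:balancedseps} to produce a balanced separator at each recursive step and then nesting the orderings of the two sides in a natural way. The ordering $v_1,\ldots,v_n$ will be built by placing the recursively ordered smaller side first, then the separator, then the recursively ordered larger side, and the set $S(v_i)$ will accumulate one separator from each level of the recursion that ``sees'' $v_i$.

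In more detail, the base case $n\le 1$ is trivial (set $S(v_1)=\{v_1\}$). For the inductive step, I would apply Lemma~\ref{lem:balancedseps} to $G$ to obtain a set $S\subseteq V(G)$ with $|S|\le t(n)+1$ and a partition $V(G)\setminus S=A\sqcup B$ with $|A|,|B|\le \frac{2}{3}n$ and no edges between $A$ and $B$ in $G-S$. Since $G[A]$ and $G[B]$ are subgraphs of $G$ on at most $\frac{2}{3}n$ vertices, they both satisfy the hypothesis of the lemma with the same function $t$, so by induction I obtain orderings of $A$ and $B$ together with separator sets $S_A(\cdot)$ and $S_B(\cdot)$. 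I then order the vertices of $G$ by listing $A$ in its inductive order, followed by the vertices of $S$ in arbitrary order, followed by $B$ in its inductive order. For $v\in A$, set $S(v):=S\cup S_A(v)$; for $v\in S$, set $S(v):=S$; and for $v\in B$, set $S(v):=S\cup S_B(v)$.

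The separation property is easy to check case by case: for $v\in A$, the ``past'' of $v$ outside $S(v)$ lies entirely in $A\setminus S_A(v)$, while the ``future'' outside $S(v)$ is a subset of $(A\setminus S_A(v))\cup B$; edges to the $A$-part are ruled out by the recursive guarantee on $S_A(v)$, and edges to $B$ are ruled out because $S\subseteq S(v)$ separates $A$ from $B$ in $G$. The case $v\in B$ is symmetric, and the case $v\in S$ is immediate since the past $\setminus S(v)\subseteq A$ and the future $\setminus S(v)\subseteq B$, which have no edges between them in $G-S$.

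For the size bound, set $k=\lceil\log_{3/2}(n)\rceil$. Using $|A|\le \tfrac{2}{3}n\le (3/2)^{k-1}$ and the monotonicity of $t$, the inductive bound on $|S_A(v)|$ is
\[
|S_A(v)|\le \sum_{j=0}^{k-1}\!\bigl(t((2/3)^{j+1}n)+1\bigr)=\sum_{j=1}^{k}\bigl(t((2/3)^{j}n)+1\bigr),
\]
and adding $|S|\le t(n)+1=t((2/3)^0 n)+1$ yields exactly the desired bound $\sum_{j=0}^{k}(t((2/3)^{j}n)+1)$; the same holds for $v\in B$, and trivially for $v\in S$. The only subtlety, and the mildest obstacle in the write-up, is verifying that the ceiling $\lceil\log_{3/2}|A|\rceil$ indeed drops by one compared to $\lceil\log_{3/2}n\rceil$, which is precisely where the balanced-separator factor $2/3$ is used to make the geometric sum telescope cleanly.
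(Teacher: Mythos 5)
Your proposal is correct and follows essentially the same route as the paper's proof: induction on $n$, a single application of Lemma~\ref{lem:balancedseps}, the ordering $A,S,B$ with $S(v):=S\cup S_A(v)$ (resp.\ $S\cup S_B(v)$, resp.\ $S$), and the same telescoping of the sum via $\lceil\log_{3/2}|A|\rceil\le\lceil\log_{3/2}n\rceil-1$. No gaps.
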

\begin{proof}
    We prove the statement by induction on $n$. The claim is trivially true for $n=1$ by setting $S(v_1):=\{v_1\}$, so moving on suppose that $n \ge 2$ and the claim is true for all subgraphs of $G$ on fewer vertices. Apply Lemma~\ref{lem:balancedseps} to find a set $S \subseteq V(G)$ of size at most $t(n)+1$ and a partition of $V(G)\setminus S$ into parts $A$ and $B$ with no edges in $G$ between them, such that $|A|, |B|\le \frac{2}{3}n$. By the inductive assumption, there exist linear orderings $v_1,\ldots,v_{|A|}$ and $w_1,\ldots,w_{|B|}$ of $A$ and $B$ and for each $i \in \{1,\ldots,|A|\}$ and $j \in \{1,\ldots,|B|\}$ sets $S_A(v_i)$ and $S_B(w_j)$ such that the following hold.  $S_A(v_i) \ni v_i$ separates $\{v_k|k<i\}$ and $\{v_k|k>i\}$ in $G[A]$, and $S_B(w_j) \ni w_j$ separates $\{w_k|k<j\}$ and $\{w_k|k>j\}$ in $G[B]$, and with $m:=\max\{|A|,|B|\} \le \frac{2}{3}n$ we have
    $$|S_A(v_i)|, |S_B(v_j)|\le \sum_{k=0}^{\lceil \log_{3/2}(m) \rceil}{\left(t\left(\left(\frac{2}{3}\right)^k m\right)+1\right)} $$
    $$\le \sum_{k=1}^{\lceil \log_{3/2}(m)+1 \rceil}{\left(t\left(\left(\frac{2}{3}\right)^k n\right)+1\right)} \le  \sum_{k=1}^{\lceil \log_{3/2}(n) \rceil}{\left(t\left(\left(\frac{2}{3}\right)^k n\right)+1\right)}.$$
    Let $s_1,\ldots,s_{|S|}$ be an arbitrary linear ordering of $S$, and consider the following linear order on $V(G)$: $v_1,\ldots,v_{|A|},s_1,\ldots,s_{|S|},w_1,\ldots,w_{|B|}$. Furthermore let $S(v_i):=S_A(v_i) \cup S$ for $i=1,\ldots,|A|$, $S(s_k):=S$ for $k=1,\ldots,|S|$, and $S(w_j):=S_B(w_j) \cup S$ for $j=1,\ldots,|B|$. It is easily verified that each of these sets separates their predecessors and successors in the ordering in $G$ and since $|S|\le t(n)+1=t((2/3)^0n)+1$, each of these sets is of size at most $\sum_{h=0}^{\lceil \log_{3/2}(n) \rceil}{\left(t\left(\left(\frac{2}{3}\right)^h n\right)+1\right)}$. This concludes the proof.
\end{proof}

Using the previous statement and Lemma~\ref{thm:neclacesplitting}, we immediately arrive at the following corollary.
\begin{cor}\label{cor:coloredseparation}
    Let $G$ be a graph given with a (partial) coloring $c:R\rightarrow \{1,\ldots,k\}$ of its vertices for some $R\subseteq V(G)$ and let $t:\mathbb{R}_+\rightarrow \mathbb{R}_+$ be an increasing function such that every subgraph of $G$ with $x$ vertices has treewidth at most $t(x)$.  
    Then there is a set $S \subseteq V(G)$ and a partition $A, B$ of $V(G) \setminus S$ such that 
    $$|S| \le k+k \sum_{i=0}^{\lceil \log_{3/2}(n) \rceil}{\left(t\left(\left(\frac{2}{3}\right)^in\right)+1\right)},$$ there are no connections between $A$ and $B$ in $G$, and $$\max\{|A\cap c^{-1}(i)|, |B\cap c^{-1}(i)|\} \le \frac{|c^{-1}(i)|}{2}$$ for each $i \in \{1,\ldots,k\}$. 
\end{cor}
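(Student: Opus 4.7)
The plan is to apply Lemma~\ref{lem:ordering} and Theorem~\ref{thm:neclacesplitting} (necklace splitting) in tandem. First, I invoke Lemma~\ref{lem:ordering} on $G$ with the given function $t$ to obtain a linear ordering $v_1, \ldots, v_n$ of $V(G)$ together with separator sets $S(v_i) \ni v_i$ of size at most $L := \sum_{i=0}^{\lceil \log_{3/2}(n) \rceil}\left(t\left((2/3)^i n\right) + 1\right)$, with the property that in $G$ there is no edge between $\{v_1,\ldots,v_{i-1}\}\setminus S(v_i)$ and $\{v_{i+1},\ldots,v_n\}\setminus S(v_i)$. Then I transfer the partial coloring $c$ to positions in $\{1,\ldots,n\}$ via $v_i \mapsto i$ and apply Theorem~\ref{thm:neclacesplitting} to this colored sequence, obtaining intervals (in left-to-right order) $J_1 < J_2 < \cdots < J_{k+1}$ of $\{1,\ldots,n\}$, each labeled as belonging to $X$ or $Y$, such that $\max\{|X \cap c^{-1}(i)|, |Y \cap c^{-1}(i)|\} \le \lceil |c^{-1}(i)|/2 \rceil$ for every color $i$.

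Next, I identify the left-to-right boundaries between consecutive intervals assigned to different sides: let $j_1 < j_2 < \cdots < j_m$ be the positions such that $v_{j_p}$ and $v_{j_p+1}$ lie on opposite sides of the bipartition $(X,Y)$. Since there are only $k+1$ intervals in total, $m \le k$. I then define
\[
S \ :=\ \bigcup_{p=1}^{m} S(v_{j_p}),\qquad A := X \setminus S,\qquad B := Y \setminus S,
\]
where I conflate $X, Y$ with the corresponding vertex sets. The size bound follows immediately: $|S| \le mL \le kL$, which is within the bound asserted in the corollary (the extra additive $k$ in the statement leaves room for slack).

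The heart of the argument is verifying that there is no edge of $G$ between $A$ and $B$. Suppose for contradiction that $v_a \in A$ and $v_b \in B$ are adjacent, with $a < b$. Because $v_a$ lies in an $X$-interval and $v_b$ in a $Y$-interval, the positions $a$ and $b$ must be separated by at least one boundary, i.e., there exists $p$ with $a \le j_p < b$. Since $v_a \notin S$ but $v_{j_p} \in S(v_{j_p}) \subseteq S$, we conclude $a < j_p$; analogously $b > j_p$. Moreover, $v_a, v_b \notin S \supseteq S(v_{j_p})$, so $v_a \in \{v_1,\ldots,v_{j_p-1}\}\setminus S(v_{j_p})$ and $v_b \in \{v_{j_p+1},\ldots,v_n\}\setminus S(v_{j_p})$. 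This directly contradicts the defining separation property of $S(v_{j_p})$ from Lemma~\ref{lem:ordering}. Finally, the color-balance condition is automatic: since $A \subseteq X$ and $B \subseteq Y$, for each $i$ we have $|A \cap c^{-1}(i)| \le |X \cap c^{-1}(i)| \le \lceil |c^{-1}(i)|/2 \rceil$, and likewise for $B$.

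The main (minor) obstacle is parsing Theorem~\ref{thm:neclacesplitting} correctly: the labels $I_1,\ldots,I_{k+1}$ there are not in left-to-right order, but rather grouped by side, so one has to re-sort them to identify the $\le k$ genuine left-to-right boundaries where the side changes. Once this is done, the separation check above goes through cleanly; everything else is a direct union bound on the sizes of the $S(v_{j_p})$.
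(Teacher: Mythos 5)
Your construction is essentially identical to the paper's: apply Lemma~\ref{lem:ordering}, run the necklace-splitting theorem on the induced coloring of positions, take $S$ to be the union of at most $k$ of the separator sets $S(v_j)$ chosen to cover the boundaries between $X$- and $Y$-intervals, and set $A:=X\setminus S$, $B:=Y\setminus S$. Your verification of the separation property is correct and in fact spelled out more carefully than in the paper.

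There is, however, one genuine (if small) gap at the very end. The corollary asserts $\max\{|A\cap c^{-1}(i)|,|B\cap c^{-1}(i)|\}\le \frac{|c^{-1}(i)|}{2}$ \emph{exactly}, whereas Theorem~\ref{thm:neclacesplitting} only delivers the bound $\left\lceil \frac{|c^{-1}(i)|}{2}\right\rceil$, which is strictly larger whenever $|c^{-1}(i)|$ is odd. You stop at the ceiling bound and declare the color-balance condition ``automatic,'' so the stated conclusion is not actually reached for odd color classes. The additive $+k$ in the size bound for $S$, which you dismissed as slack, is there precisely to absorb the fix: for each color $i$ with $|c^{-1}(i)|$ odd and with one of $|A\cap c^{-1}(i)|$, $|B\cap c^{-1}(i)|$ equal to the ceiling, move one vertex of that color from the offending side into $S$. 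This costs at most one vertex per color, hence at most $k$ in total, and brings both intersections down to at most $\frac{|c^{-1}(i)|}{2}$. With that one sentence added, your argument matches the paper's proof completely.
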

\begin{proof}
    We apply  Lemma~\ref{lem:ordering} to $G$ to get the linear ordering $v_1,\ldots,v_n$ of $V(G)$. Next, we apply Theorem~\ref{thm:neclacesplitting} to the coloring $c(i):=c(v_i)$. Let $I_1, \ldots,I_r, I_{r+1},\ldots,I_k,I_{k+1}$ be an interval partition of $\{1,\ldots,n\}$ as guaranteed by the lemma. Let $x_1,\ldots,x_k$ be a list of $k$ elements in $\{1,\ldots,n\}$ such that any two distinct intervals in $I_1,I_2,\ldots,I_{k+1}$ are separated by at least one of them. We now define $S:=S(x_1) \cup S(x_2) \cup \cdots \cup S(x_k)$, which by Lemma~\ref{lem:ordering} is of size at most $k \sum_{i=0}^{\lceil \log_{3/2}(n) \rceil}{\left(t\left(\left(\frac{2}{3}\right)^in\right)+1\right)}$, and let $A:=\{v_i|i \in I_1 \cup \cdots \cup I_r\}\setminus S, B:=\{v_i|i \in I_{r+1}\cup \cdots \cup I_{k+1}\}\setminus S$. It now follows directly from Lemma~\ref{lem:ordering} that there are no edges in $G$ connecting $A$ and $B$ and directly from the properties guaranteed by Theorem~\ref{thm:neclacesplitting} that $\max\{|A\cap c^{-1}(i)|, |B\cap c^{-1}(i)|\} \le \left\lceil\frac{|c^{-1}(i)|}{2}\right\rceil$ for each $i \in \{1,\ldots,k\}$. In order to reduce from the above bound $\left\lceil \frac{|c^{-1}(i)|}{2}\right\rceil$ to the desired $\frac{|c^{-1}(i)|}{2}$ for each $i$, we may simply move for each color $i$ a vertex of that color from either $A$ or $B$ into $S$, thereby increasing the size of $S$ by at most $k$, and the statement follows.
\end{proof}

Using Corollary~\ref{cor:coloredseparation} repeatedly, we can now prove the following statement, which directly implies Lemma~\ref{lem:embedintoproduct}. 
\begin{lem}\label{lem:poweroftree}
Let $G$ be a graph of maximum degree at most $\Delta$ and let $t:\mathbb{R}^+\rightarrow \mathbb{R}^+$ be an increasing function such that every $x$-vertex subgraph of $G$ has treewidth at most $t(x)$. Let $$k:=\lceil \log_2(\Delta)\rceil+2,\,  s:=\left\lfloor k+k\sum_{i=0}^{\lceil \log_{3/2}(n) \rceil}{\left(t\left(\left(\frac{2}{3}\right)^in\right)+1\right)}\right\rfloor.$$ Then there exists a binary tree $T$ and a partition $(X_v)_{v \in V(T)}$ of $V(G)$ such that $|X_v|=2s$ for every non-leaf $v\in V(T)$, $|X_v|\le 2s$ for every leaf $v\in V(T)$, and such that any two adjacent vertices in $G$ are contained in sets $X_u, X_v$ for vertices $u, v \in V(T)$ at distance at most $k$ in $T$.
\end{lem}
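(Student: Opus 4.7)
The plan is to construct $T$ together with its bags top-down by strong induction on $|V(G)|$. The base case is $|V(G)|\leq 2s$, handled by a single leaf carrying all vertices. For the inductive step, I would apply Corollary~\ref{cor:coloredseparation} to the current subgraph $G'$ equipped with an inductively maintained partial coloring in at most $k-1$ colors. This yields a separator $S\subseteq V(G')$ with $|S|\le s$ (matching the definition of $s$, since $(k-1)+(k-1)\sum_i(t_i+1)\le s$), and a partition $V(G')\setminus S=A\cup B$ with no edges across and with each color class split at most evenly. The root bag $X_r$ is then taken to be $S$ together with all vertices whose color is about to ``expire,'' padded if necessary to exactly $2s$, and one recurses on $G'[A]$ and $G'[B]$ with appropriately updated colorings to build the two subtrees of the root.

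The central mechanism is the coloring. At a node $t$, color $i\in\{1,\ldots,k-1\}$ is assigned to those vertices of the current subgraph that are $G$-neighbors of some vertex placed into the separator portion of the ancestor bag $i$ steps above $t$; note there are at most $s$ such separator vertices, so each color class starts with at most $s\Delta$ members. When descending from $t$ to a child $t'$, the coloring is shifted $i\mapsto i+1$, a new color~$1$ is introduced for $G$-neighbors of $X_t$'s separator portion, and every vertex which would acquire color $k$ is instead \emph{absorbed} into the bag $X_t$ before the descent is made. By Corollary~\ref{cor:coloredseparation}, every color class is halved at each descent; hence after $k-2$ descents a color class has size at most $s\Delta/2^{k-2}\le s$ per bag, exactly because $k\ge \lceil\log_2\Delta\rceil+2$. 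Thus the number of absorbed vertices at any node is at most $s$, fitting comfortably into the slack $2s-|S|\ge s$ available inside the bag.

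With this coloring in place, the distance condition is essentially automatic. Take any edge $uv\in E(G)$ and suppose $u$ is eventually placed in $X_{t_u}$. At the descent into the child immediately below $t_u$, the vertex $v$ (unless it already sits in $X_{t_u}$) enters the coloring with color~$1$, and from there it can survive at most $k-2$ further descents before being forced into some bag — either as part of a separator at an intermediate node or by being absorbed upon reaching color $k-1$. In all cases the bag $X_{t_v}$ containing $v$ satisfies $d_T(t_u,t_v)\le k-1\le k$. The remaining structural requirements ($T$ binary, bag sizes exactly $2s$ for non-leaves and at most $2s$ for leaves, partition of $V(G)$) are built into the construction, and the $\frac23$-balance of Corollary~\ref{cor:coloredseparation} together with the bag-size $2s$ bounds the total recursion depth and ensures the base case is eventually reached.

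The main obstacle will be the bookkeeping of the coloring, in particular: (i) verifying that tracking only \emph{separator-neighbors} (and not full-bag neighbors) at each ancestor level keeps the initial size of every new color class at $s\Delta$, so that the halving argument with $k-2$ applications leaves at most $s$ survivors; (ii) formulating a padding rule for bags that preserves the distance invariant — the clean choice is to pad first with expiring-color vertices and, if these do not suffice to reach $2s$, with arbitrary vertices of the current subgraph, whose own $G$-neighbors in the residual will be captured by color~$1$ at the next descent; and (iii) checking that the shift/drop/introduce update of the coloring after placing $X_t$ really maintains the invariant that color $i$ at $t'$ corresponds to separator-neighbors of the ancestor $i$ steps above $t'$, which is the property that makes the distance argument go through.
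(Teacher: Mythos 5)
Your overall strategy is the same as the paper's: a top-down recursion in which Corollary~\ref{cor:coloredseparation} is applied to a partial coloring whose classes record, level by level, the neighbors of previously placed bags, with classes shifted at each descent and absorbed into a bag once they ``expire.'' The paper phrases this as an induction on $v(G)$ with explicitly carried sets $C_1,\ldots,C_k$ (indexed in the reverse direction from your colors: its $C_1$ is your about-to-expire class), but the mechanism is identical.

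There is, however, a genuine gap in the bookkeeping, precisely at the point you flag as obstacle (i), and your proposed resolution does not work. Tracking only \emph{separator}-neighbors is not enough: an edge from an absorbed (expiring) vertex or from a padding vertex of a bag $X_t$ to some $v$ deeper in the recursion is then never recorded, and nothing forces $v$ into a bag near $t$, so the distance condition fails for such edges. Your fallback --- letting color~$1$ at the next descent capture neighbors of the padding as well --- must then also capture neighbors of the absorbed vertices, i.e.\ of the \emph{entire} bag of size $2s$. But then a fresh class has size up to $2s\Delta$ rather than $s\Delta$, and after the $k-2$ halvings available with $k-1$ colors it still has size up to $2s\Delta/2^{k-2}\le 2s$, which together with the separator of size up to $s$ overflows the bag of size $2s$. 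The paper's fix is to use $k$ classes with \emph{geometric} budgets $|C_i|\le 2^{i-1}s$: the newly created class consists of all neighbors of the full bag, of size at most $2\Delta s\le 2^{k-1}s$ (this is exactly where $k=\lceil\log_2\Delta\rceil+2$ is used), and it is only absorbed after $k-1$ halvings, by which time it has size at most $s$ and fits next to the separator. With that adjustment --- one more color class than you allow, and per-class budgets growing by a factor of $2$ per level rather than a uniform $s\Delta$ --- your argument goes through; the remaining points (ii) and (iii) you raise are then routine, as the bag is simply chosen to be any $2s$-set containing the separator together with the expiring class, and the distance verification splits into the root case and the case of two non-root nodes, which must lie in the same subtree since $A$ and $B$ carry no edges between them.
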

\begin{proof}
To enable induction, we prove a slight strengthening of the statement in the lemma as follows:

\paragraph{Claim.}\emph{Given any sequence $C_1,\ldots,C_{k}$ of disjoint subsets of $V(G)$ such that $|C_i|\le 2^{i-1}\cdot s$ for each $i$, we can find a tree $T$ and a partition $(X_v)_{v\in V(T)}$ as in the lemma with the following additional property: For each $i=1,\ldots,k$ and every vertex $v$ in $T$ at distance at least $i$ from the root, we have $X_v \cap C_i=\varnothing$.}

\medskip

Let us now prove this claim by induction on $v(G)$. It trivially holds true if $v(G)\le 2s$, so suppose that $v(G)>2s$ and the claim is satisfied by every subgraph of $G$ with fewer than $v(G)$ vertices. Interpret the disjoint sets $C_1,\ldots,C_k$ as the color classes of a partial coloring of $V(G)$, and apply Corollary~\ref{cor:coloredseparation} to find a set $S$ of size at most $s$ in $G$ and a partition $(A,B)$ of $V(G)\setminus S$ such that $|C_i \cap A|, |C_i \cap B|\le \frac{|C_i|}{2}\le 2^{i-2}\cdot s$ for each $i$. Pick a set $X$ of size exactly $2s$ in $V(G)$ such that $C_1 \cup S \subseteq X$ (this is possible since $|C_1|, |S| \le s$ and $v(G)>2s$). Now, let $A':=A\setminus X$, $B':=B \setminus X$. For each $i=1,\ldots,k-1$, define $C_i^A:=C_{i+1}\cap A'$, $C_i^B:=C_{i+1}\cap B'$. Further, let $C_k^A:=(N_G(X) \cap A')\setminus \bigcup_{i=1}^{k-1}{C_i^A}$ and $C_k^B:=(N_G(X) \cap B')\setminus \bigcup_{i=1}^{k-1}{C_i^B}$. From the above we have $|C_i^A|, |C_i^B| \le 2^{(i+1)-2}s=2^{i-1}s$ for $i=1,\ldots,k-1$, as well as $|C_k^A|, |C_k^B| \le |N_G(X)|\le \Delta\cdot 2s \le 2^{k-1}s$ since $G$ is of maximum degree at most $\Delta$ and by our choice of $k$.

Therefore, we can apply induction to $G[A']$ with the sets $C_1^A,\ldots,C_k^A$ and to $G[B']$ with the sets $C_1^B,\ldots,C_k^B$ to find binary trees $T_A$ and $T_B$, and partitions $(X_v)_{v\in V(T_A)}, (X_v)_{v \in V(T_B)}$ of $A'$ and $B'$ satisfying the inductive claim. We now form a binary tree $T$ by adding a new root vertex $r$ and connecting it to the roots of the binary trees $T_A$ and $T_B$, and setting $X_r:=X$. We claim that this tree and the partition satisfy the claim. Let us first verify the condition with respect to the sets $C_1,\ldots,C_k$. For $i=1$, and any vertex $v$ at distance at least $1$ from $r$ in $T$, we have that $X_v$ is disjoint from $X\supseteq C_1$. Further, for $i \in \{2,\ldots,k\}$, every vertex $v$ at distance at least $i$ from $r$ in $T$ has distance at least $i-1$ from the root in either $T_A$ or $T_B$, thus implying that $X_v$ is disjoint from $C_{i-1}^A=C_i \cap A'$ respectively $C_{i-1}^B=C_i \cap B'$, and therefore $X_v \cap C_i=\varnothing$ in each case. 

Hence, all that remains to be verified is that any two adjacent vertices in $G$ are contained in sets $X_u, X_v$ for vertices $u,v$ at distance at most $k$ in $T$. So consider any two vertices $u, v$ in $T$ such that there exists an edge in $G$ with endpoints in $X_u$ and $X_v$. If $u, v \neq r$ then either both of $u,v$ are contained in $T_A$ or both in $T_B$, and then it follows from the validity of the inductive claim for $T_A$ and $T_B$ that $u$ and $v$ are at distance at most $k$ in $T$. Next consider the case $u=r$, and w.l.o.g. assume $v \in V(T_A)$. Then $\varnothing \neq X_v \cap N_G(X_u)=X_v \cap N_G(X)=X_v \cap (N_G(X)\cap A')$, so in particular $X_v$ intersects $C_k^A$ or at least one of $C_1^A,\ldots,C_{k-1}^A$, let's say $C_i^A \cap X_v \neq \varnothing$ for some $i\in\{1,\ldots,k\}$. But then by our inductive call to $G[A']$, $v$ has to be at distance at most $i-1$ from the root in $T_A$. But this means that $v$ is at distance at most $i$ from the root $r$ in $T$, showing that $u=r$ and $v$ are at distance at most $i\le k$ also in this last case. Thus, $G$ satisfies the inductive assertion with respect to $C_1,\ldots,C_k$, concluding the proof of the lemma.
\end{proof}
We are now ready to show Lemma~\ref{lem:embedintoproduct}.
\begin{proof}[~of Lemma~\ref{lem:embedintoproduct}]
We do the proof of both cases (i) and (ii) of the lemma at once. Every subgraph of $G$ on $x$ vertices has treewidth at most $t(x)$, where we let $t(x):=t$ for each $x$ if we are in case (i) of the lemma. 
Let $k$ and $s$ be defined as in Lemma~\ref{lem:poweroftree} and note that we obtain $s=\Theta_\Delta(t\log n)$ in case (i) and 
$$k(t(n)+1)<s \le k+k\sum_{i=0}^{\lceil\log_{3/2}(n)\rceil}{\left(\left(\frac{2}{3}\right)^{\alpha i}t(n)+1\right)}$$ $$\le k+\frac{k}{1-(2/3)^\alpha}t(n)+k(\lceil\log_{3/2}(n)\rceil+1)=O_{\Delta,\alpha}(t(n))+O_{\Delta}(\log n)=O_{\Delta,\alpha}(t(n)),$$ hence $s=\Theta_{\Delta,\alpha}(t(n))$ in case (ii).

We can now apply Lemma~\ref{lem:poweroftree} to find a binary tree $T$ and a partition $(X_v)_{v\in T}$ of the vertex set of $G$ such that $|X_v|=2s$ for every non-leaf $v$ of $T$, $|X_v| \le 2s$ for every leaf $v$ of $T$, and such that whenever $X_u$ and $X_v$ have a connecting edge in $G$, then $u$ and $v$ are at distance at most $k=\lceil\log_2(\Delta)\rceil+2$ in $T$. Note that the number of non-leaves in $T$ is at most $\frac{n}{2s}$, and since a binary tree of size at least $3$ has at most one more leaf than non-leaf, we find that $v(T)\le \frac{n}{s}+1$. 
Furthermore, we can see 
from the above that $G$ is a subgraph of $T^k \boxtimes K_{2s}$, where $T^k$ is the graph on the same vertex set as $T$ in which two vertices are adjacent iff they are at distance at most $k$ from each other in $T$. Notice that since $T$ is a binary tree, it is not hard to show that $T^k\subseteq T' \boxtimes K_{2^k}$ where $T'$ is a tree such that $v(T') \le v(T)\le \frac{n}{s}+1$ and $\Delta(T')\le 1+2^{k}=:d$. All in all, we find that $G \subseteq (T' \boxtimes K_{2^k}) \boxtimes K_{2s} \simeq T' \boxtimes K_{2^{k+1}s}$. Since $2^{k+1}s=\Theta_{\Delta}(s)$, the assertion of the lemma in both cases now follows from the above asymptotic estimates of $s$. 
\end{proof}

\section{Proof of main result}\label{sec:proof}
In this section we introduce some tools that we need, and give the proofs of our main results. We start with the main result from~\cite{berger2019size}, which states that graphs with constant treewidth and constant degree have linear size-Ramsey numbers, and moreover, that there exists a host graph for this class of graphs which also has constant maximum degree.
\begin{thm}[Corollary 2 in \cite{berger2019size}]
     \label{cor:host-graph-bounded-treewidth}
    For any positive integers $k, \Delta, t$, and for every $n$ large enough, there exists a graph $G$ with $O(n)$ vertices and constant maximum degree such that for every $n$-vertex graph $H$ of maximum degree $\Delta$ and treewidth at most $t$, it holds that $G \rightarrow_k H$.
\end{thm}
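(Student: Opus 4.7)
The plan is to construct the host graph via a bounded-degree expander/random construction and then embed $H$ into a monochromatic ``expanding'' subgraph using a Friedman--Pippenger style online embedding. The structural input is that, by an analogue of Lemma~\ref{lem:embedintoproduct} (i) for constant $t$, every such $H$ is a subgraph of $T\boxtimes K_s$ for some bounded-degree tree $T$ and constant $s=s(\Delta,t)$. Thus it suffices to construct a constant-degree graph $G$ on $O(n)$ vertices that is $k$-Ramsey for all bounded-degree subgraphs of $T\boxtimes K_s$ simultaneously.

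For the host, I would take $G$ to be a suitable bounded-degree bipartite expander on $Cn$ vertices with $C=C(k,\Delta,t)$ and degree $d=d(k,\Delta,t)$ — for instance a random $d$-regular bipartite graph, or an explicit $(n,d,\lambda)$-graph with spectral gap large enough that every linear-sized induced subgraph inherits strong vertex-expansion. One then shows that, for any $k$-coloring of $E(G)$, by averaging plus a ``cleaning'' step (iteratively removing vertices whose monochromatic degree in every color is too small), some color $i$ contains a spanning subgraph $G_i\subseteq G$ on $\Omega(n)$ vertices with the property that every set $S$ of size at most, say, $n/C'$ has $|N_{G_i}(S)|\ge (\Delta+2)|S|$. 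This is the key ``robust'' expansion property one needs to feed into the embedding lemma.

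To embed $H$ into $G_i$, I would process the vertices of $H$ in an order produced by a width-$t$ tree decomposition, so that at every point the set of already-embedded vertices that could still acquire new neighbors (the ``active boundary'') has size at most $t+\Delta$. This is exactly the setup of the Friedman--Pippenger extension lemma: provided a partial embedding $\phi$ has, for every subset $S$ of its image, at least $(\Delta+1)|S|$ ``free'' neighbors in $G_i$, $\phi$ can always be extended by one vertex without violating the invariant. Expansion on small sets plus boundedness of the active boundary is precisely what lets the invariant be maintained throughout the embedding of all $n$ vertices of $H$. Universality (one $G$ works for every such $H$) comes for free, because the expansion property of $G_i$ is intrinsic to $G$ and its $k$-coloring, not to $H$.

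The main obstacle is balancing three competing requirements on $G$: \emph{constant maximum degree}, \emph{linear size}, and the fact that an arbitrary $k$-coloring must still leave a monochromatic subgraph with strong enough expansion to support the Friedman--Pippenger argument for all $H$ of treewidth up to $t$. Random $d$-regular graphs give constant degree and good expansion, but one has to argue carefully that the expansion survives the adversarial coloring; the usual fix is to tune $d$ large enough (as a function of $k,\Delta,t$) so that even the worst-case color class retains $(\Delta+2)$-vertex expansion on sets of linear size, perhaps by first passing to a subgraph $G_i$ via the cleaning step above. Once this is done, the embedding of $H$ is essentially mechanical, and constantness of $t$ is used only to bound the interface size in the tree decomposition, and hence the expansion factor required from $G_i$.
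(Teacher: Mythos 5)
This theorem is not proved in the paper at all: it is imported verbatim as Corollary~2 of~\cite{berger2019size}, with only the remark that the construction there happens to have constant maximum degree. So your proposal has to be judged on its own merits, and it has a fatal gap. A bounded-degree expander --- and in particular the \emph{bipartite} expander you suggest --- simply does not contain all bounded-degree, bounded-treewidth graphs $H$, no matter how the coloring behaves. A bipartite host contains no triangle, yet $K_3$ has maximum degree $2$ and treewidth $2$; more generally, constant-degree expanders (random $d$-regular graphs, Ramanujan graphs) can have large girth, while $H$ of treewidth $t$ may contain $K_{t+1}$ and many short cycles. The Friedman--Pippenger extension lemma is intrinsically a \emph{tree}-embedding tool: it extends a partial embedding by a vertex having exactly one already-embedded neighbour. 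For a general $H$, a new vertex can have up to $\Delta$ already-embedded neighbours, and you must place it in their \emph{common} neighbourhood; the vertex-expansion condition $|N_{G_i}(S)|\ge(\Delta+2)|S|$ gives no control whatsoever over common neighbourhoods. This is exactly why the actual host graphs in~\cite{berger2019size} (and in~\cite{kamcev2021bounded}) are built as clique blow-ups of tree-like or expander-like bounded-degree graphs: the $K_s$-factor of $T\boxtimes K_s$ must be manufactured into the host so that the required local dense structure exists in the first place, and the embedding then proceeds clique-by-clique (via matchings or regularity-type arguments) rather than by vanilla Friedman--Pippenger.

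Two smaller points. First, your claim that processing $H$ in tree-decomposition order keeps the ``active boundary'' of size at most $t+\Delta$ is false: embedding a balanced binary tree (treewidth $1$) in breadth-first order leaves $\Omega(n)$ embedded vertices with unembedded children. Friedman--Pippenger does not need a small active boundary (its invariant is quantified over all small sets), but the error suggests the invariant you have in mind is not the right one. Second, the ``cleaning'' step --- extracting from an adversarially $k$-coloured expander a monochromatic subgraph on $\Omega(n)$ vertices that still $(\Delta+2)$-expands --- is itself a nontrivial lemma that is only known in forms tailored to tree embeddings; it cannot be waved through by averaging. The structural reduction to $T\boxtimes K_s$ with constant $s$ is the right first move, but after that the host construction and the embedding argument both need to be rebuilt around the clique structure.
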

The theorem above is stated in~\cite{berger2019size} without the maximum degree condition on $G$, but the construction in~\cite{berger2019size} clearly satisfies it. 

\subsection{Regularity method}
Similarly to previous work on size-Ramsey numbers, to find monochromatic subgraphs in our host graph, we make use of the celebrated \emph{sparse regularity method} (for a detailed exposition of which we refer the reader to~\cite{gerke2005sparse}). Instead of working with regular pairs as usual, it is enough for our purposes to only have lower bounds on the density of subgraphs, as expressed in the following definition.
\begin{defn}
\label{def:densepair}
Let $\alpha, \eps >0$ and $0 < p \leq 1$. For a graph $G=(V,E)$ and disjoint sets $X,Y \subseteq V$, we say that $(X,Y)$ is \emph{$(\eps, \alpha, p)$-dense} if for any $X' \subseteq X, Y' \subseteq Y$ with $|X'| \geq \eps|X|$ and $|Y'| \geq \eps |Y|$, we have
$$ d_G(X',Y') \geq (\alpha - \eps)p.$$
\end{defn}

To be able to apply the sparse regularity method, we need our host graph to have somewhat `uniformly' distributed edges, typically captured by the concept of $(\lambda,p)$-uniformity as follows.
\begin{defn}
    A graph $G=(V,E)$ is $(\lambda, p)$-uniform if for all disjoint $U,W \subseteq V$ with $|U|,|W| \geq \lambda |V|$, we have $(1-\lambda) p\le d_G(U,W) \le (1+\lambda)p$. If $G$ is bipartite with bipartition $V = A \cup B$, then we say that $G$ is $(\lambda, p)$-uniform if the same holds for all $U \subseteq A$ and $W \subseteq B$ of size at least $\lambda |A|$ and $\lambda |B|$ respectively.
\end{defn}

The next lemma, which we will use in the proofs of Lemmas \ref{lem:smallblowupsizeramsey} and \ref{lem:blowupsizeramsey}, allows us to find certain useful structure in colored blow-ups of bounded-degree graphs. Namely, it turns out one can choose a small fraction of the vertices in each part of the blow-up in such a way that the bipartite graph between each pair of these subparts has at least one color with the lower-regularity properties captured by Definition~\ref{def:densepair} and some minimum density.
\begin{lem}[Lemma 2.3 in~\cite{conlon2022size}]
\label{lem:get-regularity}
For every $k,\Delta \geq 2$ and $\eps > 0$, there exists $\lambda > 0$ such that the following holds for every $p \in (0,1]$. Let $H$ be a graph on at least two vertices with $\Delta(H) \leq \Delta$ and let $\Gamma$ be obtained by replacing every $x \in V(H)$ with an independent set $V_x$ of sufficiently large order $n$ and every $xy \in E(H)$ by a $(\lambda,p)$-uniform bipartite graph between $V_x$ and $V_y$. Then, for every $k$-colouring of the edges of $\Gamma$, there exists a $k$-colouring $\phi$ of the edges of $H$ and, for every $x \in V(H)$, a subset $U_x \subseteq V_x$ of order $|U_x| = \lambda n$ such that $(U_x, U_y; E_{\phi(xy)})$ is $(\eps, \frac{1}{2k}, p)$-dense for each $xy \in H$ where $E_{\phi(xy)} \subseteq E_{\Gamma}$ stands for the edges of colour $\phi(xy)$ between $U_x$ and $U_y$ in $\Gamma$. 
\end{lem}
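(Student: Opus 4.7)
The plan is to combine per-edge sparse regularity with a common-refinement argument that exploits the bounded maximum degree of $H$. I would begin by choosing a small auxiliary regularity parameter $\eps_0$ depending on $\eps, k$ and $\Delta$; letting $M = M(\eps_0, k)$ denote the number of parts produced by a Kohayakawa-type sparse regularity partition applied with parameter $\eps_0$, I set $\lambda$ to be of order $M^{-\Delta}$, with $\eps_0$ much smaller than $\eps M^{-\Delta}$ so that regularity at scale $\eps_0$ inside a full part still certifies $\eps$-regularity inside each refined sub-part.

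For each edge $xy \in E(H)$, apply a sparse regularity lemma to the $k$-edge-coloured $(\lambda,p)$-uniform bipartite graph $\Gamma_{xy}$. This produces partitions of $V_x$ and $V_y$ into at most $M$ parts each, with all but a small fraction of pairs $(A,B)$ of parts being $\eps_0$-regular in the sparse sense for every colour class. By $(\lambda,p)$-uniformity the density of $\Gamma_{xy}$ on any such regular pair lies in $(1\pm\lambda)p$, so averaging over the $k$ colours, some colour $\phi(xy)$ attains density at least $\tfrac{(1-\lambda)p}{k}$ on that pair; the $\eps_0$-regularity then propagates this density to every large sub-pair, making $(A,B;E_{\phi(xy)})$ into an $(\eps,\tfrac{1}{2k},p)$-dense pair. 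To obtain a single set $U_x$ per vertex that is valid simultaneously for all $\le \Delta$ incident edges, take the common refinement of the $\le \Delta$ partitions of $V_x$ arising from the edges incident to $x$. This yields at most $M^\Delta$ refined parts, each of which lies inside one part of every single-edge partition and thus inherits the relevant regularity and density properties per incident edge. Choose $U_x$ to be such a refined part, noting that its size is at least $n/M^\Delta \ge \lambda n$.

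The main obstacle is the \emph{joint} consistency of the selection across $V(H)$: for each edge $xy$, the chosen parts $U_x$ and $U_y$ must together sit inside a regular pair on which some colour $\phi(xy)$ is dense, not merely each $U_x$ being a good part individually. I would resolve this by processing the vertices of $H$ in an arbitrary (say, BFS) order and greedily committing to a refined part for each $U_x$ that is compatible with the parts already chosen for its previously processed neighbours in $H$. Since each vertex has at most $\Delta$ previously processed neighbours, and the per-edge partition excludes only a bounded fraction of refined parts on each side (namely those paired with an irregular or low-density partner), the greedy procedure succeeds as long as the number of available refined parts is large enough, which is the precise reason for choosing $\lambda$ of order $M^{-\Delta}$ with $\eps_0 \ll \eps M^{-\Delta}$ at the outset.
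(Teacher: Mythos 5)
The paper does not prove this lemma itself (it is quoted from Conlon--Nenadov--Truji\'c), so the only question is whether your argument stands on its own, and I do not think it does: the choice of constants is circular. You set $\lambda \sim M^{-\Delta}$ where $M = M(\eps_0,k)$ is the number of parts produced by the sparse regularity lemma at scale $\eps_0$, and you simultaneously require $\eps_0 \ll \eps M^{-\Delta}$ so that $\eps_0$-regularity of a full pair $(A,B)$ certifies $(\eps,\tfrac{1}{2k},p)$-denseness of the refined sub-pair $(U_x,U_y)$. This second requirement is genuinely needed for the inheritance step you describe: a subset $X'\subseteq U_x$ with $|X'|\ge \eps|U_x| = \eps n/M^{\Delta}$ is only ``large'' for the $\eps_0$-regularity of $(A,B)$ if $\eps n/M^{\Delta} \ge \eps_0|A|$, i.e.\ roughly $\eps_0 \le \eps M^{1-\Delta}$. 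But $M(\eps_0)$ grows without bound (tower-type) as $\eps_0\to 0$, so there is no $\eps_0>0$ with $\eps_0 \le \eps M(\eps_0)^{1-\Delta}$ for $\Delta\ge 2$. In short, naive restriction of a sparse-regular pair to sub-parts of the common refinement degrades the regularity parameter by the factor $|A|/|U_x|$, and that factor itself depends on the regularity parameter through $M$; the constants cannot be closed. The standard way to break this circularity is the hereditary-regularity theorem of Gerke, Kohayakawa, R\"odl and Steger (cf.\ the citation of \cite{gerke2007smallt}, Theorem 3.6, in the appendix of this paper): almost every subset of a \emph{fixed} size $q$ (needing only $q\ge Cp^{-1}$, independent of the scale of the ambient pair) of one side of an $(\eps',\alpha,p)$-dense pair inherits $(\eps,\alpha,p)$-denseness with a fresh parameter $\eps$ that does not blow up like $\eps'\cdot|A|/q$. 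One then chooses each $U_x$ randomly of size $\lambda n$ and union-bounds over the at most $\Delta$ incident edges. Your proposal never invokes such a statement, and without it the argument does not go through.

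A secondary, fixable issue: your greedy selection can get stuck. When you process $x$, a previously committed part $B\subseteq V_y$ may form an irregular (or low-density) pair with essentially every part of $V_x$ in the partition associated to the edge $xy$ --- the regularity lemma only bounds the \emph{total} number of irregular pairs, not their distribution. The selection therefore has to be made globally (e.g.\ by discarding in advance those parts that are bad for many partners on any incident edge, or by a random choice of all the $U_x$ simultaneously), rather than by committing one vertex at a time against already-fixed neighbours.
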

\subsection{Key lemmas and proof of Theorem~\ref{thm:main-treewidth}}

As announced in the introduction, here we prove Theorem~\ref{thm:main-treewidth}. To do so, we will show the following two stronger lemmas, each of which implies one of the two statements in Theorem~\ref{thm:main-treewidth}. Both of them give bounds on the size-Ramsey number of bounded-degree graphs which are contained in a product of a bounded-degree tree and a clique of certain size. 
 We first state the two lemmas, and show how \Cref{thm:main-treewidth} is implied. In the next subsection, we give a proof of the first lemma, which relies on the proof of the classic lemma for embedding graphs into regular partitions. We are succinct with the details, as the embedding lemma is by now a standard tool (for an in-depth survey of it and other foundational regularity tools, see~\cite{komlos2002regularity}), and only minor modifications are necessary for our application.

After that, we give a proof of the second lemma, which relies on the ideas used for the first result, and on the embedding machinery developed in \cite{kohayakawa2011sparse}, with several changes needed to adapt their method to our setting. We present those changes in the appendix. 

\begin{lem} \label{lem:smallblowupsizeramsey}
Let $\Delta, d, k$ be positive integers and let $H$ be an $n$-vertex graph with $\Delta(H)\leq \Delta$, such that $H\subset T\boxtimes K_s$ for some $s=s(n)$ and a tree $T$ with $\Delta(T)\leq d$ and $v(T)=\tau = \tau(n)$. Then $\hat{r}_k(H)= O(\tau s^2)$.
\end{lem}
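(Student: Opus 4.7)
The plan is to build a host graph $\Gamma$ as a random blow-up of a suitable bounded-degree Ramsey host, extract a monochromatic dense substructure via Lemma~\ref{lem:get-regularity} combined with a Ramsey-for-trees argument, and then invoke a standard sparse embedding lemma.

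First, I would reduce to a setting where all edges of $H$ run between distinct blow-up clusters. Since $\Delta(H)\le \Delta$, each induced subgraph $H[V_t(H)]$ (where $V_t(H)$ denotes the vertices of $H$ lying in the $t$-th cluster of the product $T\boxtimes K_s$) admits a proper $(\Delta+1)$-coloring $\chi_t$. Combining $\chi_t$ with the $T$-coordinate yields a map $V(H)\to V(\tilde T)$, where $\tilde T := T \boxtimes K_{\Delta+1}$, under which every edge of $H$ has endpoints in two distinct but adjacent vertices of $\tilde T$. Thus $H$ is a subgraph of the independent-set blow-up of $\tilde T$ with clusters of size at most $s$. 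Now $\tilde T$ has $(\Delta+1)\tau$ vertices, has maximum degree bounded in terms of $d, \Delta$, and has treewidth at most $2\Delta+1$, so by Theorem~\ref{cor:host-graph-bounded-treewidth} there exists a graph $F$ on $O(\tau)$ vertices of constant maximum degree with $F\rightarrow_k \tilde T$.

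Next, I would take $\Gamma$ to be the random blow-up of $F$: each $v\in V(F)$ is replaced by an independent set $V_v$ of size $m = \Theta(s)$, and for every $uv\in E(F)$ an independent $G(m,m,p)$ random bipartite graph is inserted between $V_u$ and $V_v$, where $p$ is a suitable constant. Standard Chernoff and union-bound arguments show that with positive probability $\Gamma$ has $O(\tau s^2)$ edges and each inserted bipartite graph is $(\lambda,p)$-uniform for any prescribed constant $\lambda>0$. Given a $k$-coloring of $E(\Gamma)$, Lemma~\ref{lem:get-regularity} applied with $F$ in the role of the base graph yields a coloring $\phi : E(F)\to [k]$ and subsets $U_v\subseteq V_v$ of order $\lambda m$ such that each pair $(U_u, U_v; E_{\phi(uv)})$ is $(\eps, \tfrac{1}{2k}, p)$-dense. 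Applying the Ramsey property $F\rightarrow_k \tilde T$ to the coloring $\phi$ then produces a copy $\tilde T^*\subseteq F$ all of whose edges carry a single color $c\in [k]$.

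It remains to embed $H$ into the blow-up of $\tilde T^*$ using only color-$c$ edges between the $U_u$'s. By the first step, $H$ embeds into a $\tilde T$-shaped independent-set blow-up with clusters of size at most $s \le \lambda m$, while along $\tilde T^*$ all the relevant bipartite pairs are $(\eps, \tfrac{1}{2k}, p)$-dense in color $c$. A standard sparse embedding lemma for bounded-degree graphs into blow-ups with dense pairs then produces the desired monochromatic copy of $H$ in $\Gamma$. The main technical obstacle is verifying the hypotheses of this embedding lemma: one must choose $\eps$ and $\lambda$ sufficiently small and the constant factor in $m = \Theta(s)$ sufficiently large (all as functions of $\Delta, d, k$) so that the density $\tfrac{1}{2k}$ together with the bounded degree $\Delta$ of $H$ is enough to carry out the standard greedy / counting embedding argument.
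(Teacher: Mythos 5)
Your proposal is correct and follows essentially the same route as the paper: reduce $H$ to a subgraph of an independent-set blow-up of $T\boxtimes K_{\Delta+1}$ via proper $(\Delta+1)$-colorings of the clusters, take a constant-degree host that is $k$-Ramsey for $T\boxtimes K_{\Delta+1}$ (Theorem~\ref{cor:host-graph-bounded-treewidth}), blow it up by $\Theta(s)$, extract monochromatic $(\eps,\tfrac{1}{2k},p)$-dense pairs along a monochromatic copy of $T\boxtimes K_{\Delta+1}$ via Lemma~\ref{lem:get-regularity}, and finish with the standard greedy embedding with candidate sets. The only (immaterial) deviation is that you insert random bipartite graphs at constant density $p$, whereas the paper simply uses the complete blow-up $\cR\boxtimes K_{Cs}$ (i.e.\ $p=1$), which gives the same $O(\tau s^2)$ edge count without the probabilistic step.
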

\begin{lem}\label{lem:blowupsizeramsey}
    Let $\Delta, d, k$ be positive integers and let $H$ be an $n$-vertex graph with $\Delta(H) \leq \Delta$, such that $H \subset T \boxtimes K_s$ for some $s = \Omega( e^{\sqrt{\log{n}}})$ and a tree $T$ with $\Delta(T) \leq d$ and $v(T) = \tau = \tau(n)$. Then $\hat{r}_k(H) = O(\tau s^2(\frac{\log s}{s})^{1/\Delta})$. 
\end{lem}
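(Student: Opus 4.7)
The plan is to follow the sparse regularity paradigm: I would build a random host graph $\Gamma$ whose edge count already matches the target $O(\tau s^2(\log s/s)^{1/\Delta})$, apply Lemma~\ref{lem:get-regularity} to extract a monochromatic dense-pair structure, and then invoke a sparse embedding lemma adapted from the Kohayakawa--Schacht--R\"odl--Szemer\'edi machinery~\cite{kohayakawa2011sparse} to locate $H$ inside it.

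A first reduction will make the embedding cleaner. I set $T^*:=T\boxtimes K_2$; by associativity of $\boxtimes$ one has $T^*\boxtimes K_{s/2}\cong T\boxtimes K_s$, so $H\subseteq T^*\boxtimes K_{s/2}$. Moreover $|V(T^*)|=2\tau$, $\Delta(T^*)\le 2d+1$, and $T^*$ has treewidth at most $3$. The point of this reduction is that within-cluster edges of $H$ (inside each $K_s$-factor) become edges between sibling clusters of $T^*\boxtimes K_{s/2}$, so the host graph can be built with no edges inside any cluster. I then apply Theorem~\ref{cor:host-graph-bounded-treewidth} to $T^*$ to obtain a bounded-degree graph $G^*$ on $O_{k}(\tau)$ vertices with $G^*\xrightarrow{k}T^*$.

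Next I take $\Gamma$ to be the random blow-up of $G^*$: each vertex of $G^*$ is replaced by an independent set $V_x$ of size $N=Cs$ for a sufficiently large constant $C=C(\Delta,k,d)$, and each edge $xy\in E(G^*)$ by an independent random bipartite graph $G(V_x,V_y;p)$ with $p=c(\log s/s)^{1/\Delta}$. Then $e(\Gamma)=O(e(G^*)\cdot N^2 p)=O(\tau s^2(\log s/s)^{1/\Delta})$, as required. Standard Chernoff bounds show that with high probability every inserted bipartite graph is $(\lambda,p)$-uniform. For any $k$-coloring of $E(\Gamma)$, Lemma~\ref{lem:get-regularity} (applied with pattern graph $G^*$) produces a $k$-coloring $\phi$ of $E(G^*)$ and subsets $U_x\subseteq V_x$ with $|U_x|=\lambda N$ so that $(U_x,U_y;E_{\phi(xy)})$ is $(\varepsilon,\tfrac{1}{2k},p)$-dense for every $xy\in E(G^*)$. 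Since $G^*\xrightarrow{k}T^*$, some color class of $\phi$ contains a copy $\widetilde T^*$ of $T^*$; restricting to $\widetilde T^*$ yields a system of $(\varepsilon,\tfrac{1}{2k},p)$-dense pairs in a single color, indexed by the edges of $\widetilde T^*$.

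The remaining task, and the main obstacle, is to embed $H\subseteq T^*\boxtimes K_{s/2}$ into this dense-pair system so that the $K_{s/2}$-cluster of $H$ labeled by $x\in V(T^*)$ lands inside the corresponding cluster $U_{\widetilde x}$. I plan to carry this out via a sparse embedding lemma tailored to the cluster-blow-up setting, in the spirit of~\cite{kohayakawa2011sparse}: embed the vertices of $H$ greedily in an order with at most $\Delta$ back-neighbors, inductively maintaining that the candidate set of each unembedded vertex --- the common typical neighborhood in the relevant dense pairs --- has size of order $(p\lambda N)^{d'}\lambda N$, where $d'$ is its number of already-embedded neighbors. The choice $p=c(\log s/s)^{1/\Delta}$ is precisely what keeps these candidate sets much larger than $1$ even when $d'=\Delta$, and the hypothesis $s=\Omega(e^{\sqrt{\log n}})$ is what controls the accumulated failure probabilities across the $n$ embedding rounds, ensuring $\log^{\Theta(1)}\! s\gg \log n$ so that a union bound succeeds. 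Adapting the sparse counting/extension machinery of~\cite{kohayakawa2011sparse} from the binomial random graph setting to this cluster-blow-up setting accounts for the bulk of the technical work and, consistent with the paper's outline, is to be deferred to the appendix.
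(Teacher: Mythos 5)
Your overall architecture (random blow-up of a bounded-degree Ramsey host for a product structure, extraction of monochromatic $(\eps,\tfrac1{2k},p)$-dense pairs via Lemma~\ref{lem:get-regularity}, then a KRSS-style sparse embedding) matches the paper's. But two concrete steps are wrong or missing. First, the reduction to $T^*=T\boxtimes K_2$ does not do what you claim: $H\subseteq T\boxtimes K_s\cong T^*\boxtimes K_{s/2}$ gives no guarantee that the vertices of $H$ landing in a single $K_{s/2}$-subcluster form an independent set, so you cannot build the host with ``no edges inside any cluster'' after this split. Worse, independence alone (which a $K_{\Delta+1}$-blow-up would give, as in the dense Lemma~\ref{lem:smallblowupsizeramsey}) is still not enough here. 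The paper blows up by $\tilde\Delta=\Delta^4+2\Delta+1$, chosen so that each cluster $S_x$ of $H$ can be partitioned, via a proper coloring of $H^3[S_x]$ refined by left-degrees, into classes in which any two vertices are at distance at least $4$ in $H$ and have the same number of previously embedded neighbours. That distance-$4$ property is what makes the sets of already-embedded neighbours of same-class vertices pairwise disjoint, which is the hypothesis needed for the second point below.

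Second, the greedy embedding you describe fails. When a vertex has $\Delta$ already-embedded neighbours, its candidate set has size $\Theta(p^\Delta s)=\Theta(\log s)$, whereas up to $\Theta(s)$ vertices must be embedded into the same part; keeping candidate sets ``much larger than $1$'' is therefore nowhere near sufficient, since earlier embeddings into the same part can exhaust an entire candidate set. The paper's Lemma~\ref{lem:KRSS_generalization} resolves this by embedding each class $W_{\ell+1}$ simultaneously via Hall's condition, verified using a ``congestion property'' of the random host (Lemma~\ref{lem:auxiliary-graph-not-too-dense}) together with regularity-inheritance properties (Lemma~\ref{lem:regularity-inheritance}); the hypothesis $s=\Omega(e^{\sqrt{\log n}})$ enters to make the union bound over the $O(n)$ local pieces of the host succeed, not to control ``failure probabilities across embedding rounds'' (the embedding is deterministic once these properties hold). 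Deferring ``the bulk of the technical work'' is legitimate only if the deferred step is correct in outline; as stated, yours is not.
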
    
    
\begin{proof}[ of \Cref{thm:main-treewidth}]
Let $H$ be an $n$-vertex graph of maximum degree $\Delta$ and treewidth $t=t(n)$. By Lemma~\ref{lem:embedintoproduct}, we know that $H\subseteq T\boxtimes K_s$ for $s=\Theta(t\log n)$ and $v(T)=O(n/s)$, with $\Delta(T)\leq d=d(\Delta)$.
Now, by Lemma~\ref{lem:smallblowupsizeramsey}, the size-Ramsey number of $H$ is $O(ns)=O(nt\log n)$, which gives the first part of the theorem.

For the second part, assume $t=\Omega(e^{\sqrt{\log{n}}})$. Note that by \Cref{lem:blowupsizeramsey} and since $s=\Theta(t\log n)=\Omega(e^{\sqrt{\log{n}}})$, we have $\hat{r}(H)= O(ns(\frac{\log s}{s})^{1/\Delta})=\tilde O(nt^{1-1/\Delta})$, as required.
\end{proof}    
\subsection{Proof of Lemma~\ref{lem:smallblowupsizeramsey}}
In this section, we give the proof of our first key lemma. We build our host graph in a way that is convenient for the structure $T \boxtimes K_s$ we know our graph $H$ has. Namely, we take a graph $\cR$ that is Ramsey for a constant blow-up of $T$ and blow it up by a factor of roughly $s$. We then make use of Lemma~\ref{lem:get-regularity} to get colorful lower-regular pairs within each random bipartite graph, thus yielding an auxiliary coloring of $\cR$. Next, using that we picked $\cR$ to be Ramsey for a constant blow-up of $T$, we find a monochromatic copy of the latter in the auxiliary coloring, which corresponds to a monochromatic $T$-shaped structure of lower-regular pairs in our host graph. We can then use an approach similar to the embedding lemma to embed $H$ into that monochromatic structure, being guided by the containment of $H$ in $T \boxtimes K_s$ to map each vertex to an appropriate set.

\begin{proof}[ of~\Cref{lem:smallblowupsizeramsey}]
Set $C^{-1}\ll \lambda\ll \varepsilon\ll k^{-1}, \Delta^{-1},d^{-1}$.
Let $F=T\boxtimes K_{\Delta+1}$ and note that $F$ has treewidth at most $2\Delta + 2$ by the definition of treewidth. 
Let $\mathcal R$ be the graph given by Theorem~\ref{cor:host-graph-bounded-treewidth} which is $k$-Ramsey for $F$ and which has $O(v(F))=O(\tau)$ vertices and constant maximum degree. The host graph for $H$ which we will consider is $G=\cR\boxtimes K_{Cs}$. Note that $G$ has $O(\tau s^2)$ edges. For each $x\in V(\cR)$, denote the copy of $K_{Cs}$ corresponding to $x$ by $V_x$. Consider an arbitrary $k$-coloring of the edges of $ G$.

We apply \Cref{lem:get-regularity} with $p=1$ and $\lambda$ to the graph $\Gamma:=G$, which is a blow-up of $\cR$. Hence, we get a coloring of the edges of $\cR$ and, for each $x\in V(\cR)$, a subset $U_x\subseteq V_x$ of size $\lambda Cs$, such that for every edge $\{x,y\}$ in $\cR$, the edges between $U_x$ and $U_y$ form an $(\varepsilon, \frac{1}{2k}, 1)$-dense pair in the color of the edge $\{x,y\}$ (meaning that the density between every two sets $X\subseteq U_x$ and $Y\subseteq U_y$ of sizes $|X|\geq \varepsilon|U_x|$ and  $|Y|\geq \varepsilon|U_y|$ is at least $\frac{1}{2k}-\varepsilon$). 

Now, since $\mathcal R$ is $k$-Ramsey for $F$, there is a monochromatic copy of $F$ in $\mathcal R$, which in turn means that for every edge $\{x,y\}$ of that copy, the pair $U_x$ and $U_y$ forms a $(\varepsilon, \frac{1}{2k}, 1)$-dense
pair in the same color, say red. As a last step before describing the embedding process, for each $v\in V(T)$, denote with $H_v$ the subgraph of $H$ which lives inside of the copy of $K_s$ that corresponds to the vertex $v$ (recall that $H\subseteq T\boxtimes K_s$). Since $H$ has maximum degree $\Delta$, it is $(\Delta+1)$-partite, so denote by $H_v^1,\ldots, H_v^{d+1}$ one such partition of $H_v$, for each $v\in V(T)$. 
To summarize, we now want to embed $H\subseteq T\boxtimes K_s$ into the  red subgraph of $G$ which is obtained from $T\boxtimes K_{\Delta+1}\boxtimes K_{\lambda Cs}$ by replacing the complete bipartite graphs between the relevant copies of $K_{\lambda Cs}$ by $(\varepsilon, \frac{1}{2k}, 1)$-dense pairs.

The embedding of $H$ into the found red subgraph of $G$ is done by a standard procedure, which is a variant of the well-known embedding lemma. Briefly, for each $v\in V(T)$, we will want to embed $H_v$ into $v\boxtimes K_{\Delta+1}\boxtimes K_{\lambda Cs}$, so that each $H_v^i$ is embedded into its own copy of $K_{\lambda Cs}$ in $v\boxtimes K_{\Delta+1}\boxtimes K_{\lambda Cs}$. This embedding can be done vertex by vertex, crucially maintaining the following invariant. For each vertex $x \in V(H)$ which has been assigned to the copy $U$ of $K_{\lambda Cs}$ but has not yet been embedded, we consider the `candidate set' $C_x$ that consists of all the vertices in $U$ that are in the common neighbourhood of all already embedded neighbours of $x$ in $H$. While embedding $H$, we ensure that for each such $x$, the size of the candidate set $C_x$ is at least $(\frac{1}{4k})^i|U|$, where $i$ is the number of neighbours of $x$ in $H$ which are already embedded at the observed point of the embedding process. When it is time for vertex $x$ to be embedded, we choose a vertex $y \in C_x$ for that purpose, in such a way that $y$ has enough neighbours in the candidate set of each neighbour $x'$ of $x$ in $H$ yet to be embedded, in order to preserve the aforementioned invariant. This is possible because, due to the lower-regular properties of the graph between $C_x$ and $C_{x'}$, at most $\eps \lambda Cs$ of the vertices in $C_x$ have degree less than $\frac{1}{4k} |C_{x'}|$ into $C_{x'}$. Additionally, at most $s$ many vertices in $C_x$ are already taken by vertices previously embedded in $U$, leaving us with at least $$|C_x| - \Delta \eps \lambda Cs - s \geq \Big(\frac{1}{4k}\Big)^{\Delta} \lambda C s - \Delta \eps \lambda Cs - s \gg 1  $$ viable candidates for the embedding of $x$, where we used that $\Delta(H)\leq \Delta$ and $(\frac{1}{4k})^{\Delta}\gg \varepsilon$. Thus, we can continue doing this until we embed the whole graph $H$. For more details, we refer the reader to \cite{chvatal1983ramsey} where such an embedding lemma is proven; the difference here is that we have more than a constant number of large cliques in which we want to embed, but the proof is essentially  the same.\end{proof}

\subsection{Proof of Lemma~\ref{lem:blowupsizeramsey}
}
We make use of the following result, which allows us to embed any bounded-degree subgraph of a blow-up of a bounded-degree graph $\cR$ into a certain blow-up of $\cR$ where the bipartite graphs between the parts are lower-regular pairs as in Definition~\ref{def:densepair}. Although here we only apply the lemma below with $\cR$ being a tree, we state it in this more general form, as it might be of independent interest and may prove to be useful in other settings which deal with embedding bounded-degree graphs.

\begin{restatable}{lem}{KRSSgeneralizationlem}
\label{lem:KRSS_generalization}
Let $\Delta,d, \rho^{-1} \ll \eps^{-1} \ll C' \ll C$ and $n$ be large enough. Let $s = \Omega(e^{\sqrt{\log{n}}})$ and $\frac{n}{s} \leq \tau \leq n$. Set $p:=C \big( \frac{\log{s}}{s} \big)^{1/\Delta}$ and $\Tilde{\Delta} := \Delta^4 + 2\Delta +1$. Then w.h.p. $G \sim G(\tau \Tilde{\Delta}C's , p)$ is such that the following holds.
Let $H$ be an $n$-vertex graph of maximum degree $\Delta$ that is a subgraph of $\cR \boxtimes K_s$ for some $\tau$-vertex graph $\cR$ with $\Delta(\cR) \leq d$. Let $J:= \cR \boxtimes K_{\Tilde{\Delta}}$ and $F \subseteq G$ be a graph with vertex set $V(F) = \bigcup_{x \in V(J)} F_x$ such that $|F_x| = C' s$ for each $x \in V(J)$ and edge set consisting of an $(\eps, \rho, p)$-dense bipartite graph between $F_x$ and $F_y$ for each $xy \in E(J)$. Then $H$ can be embedded in $F$. 
\end{restatable}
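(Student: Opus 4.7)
The plan is to follow the sparse random greedy embedding strategy of Kohayakawa, R\"odl, Schacht and Szemer\'edi~\cite{kohayakawa2011sparse}, modified to leverage the product structure of $J=\cR\boxtimes K_{\Tilde\Delta}$ and the much simpler combinatorial geometry provided by the blow-up of $\cR$ rather than a generic sparse host.

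First I would set up a partition of $V(H)$ consistent with the target structure. Since $H\subseteq \cR\boxtimes K_s$, each $x\in V(H)$ has a canonical vertex $v(x)\in V(\cR)$; write $H_v:=H[\{x:v(x)=v\}]$, which has maximum degree at most $\Delta$. For each $v$ I would properly color the fourth power $H_v^4$, whose maximum degree is at most $\Delta^4$, using at most $\Tilde\Delta=\Delta^4+2\Delta+1$ colors, and assign each $x\in H_v$ to the copy $v^{(i)}$ of $v$ in $J$ determined by its color. Adjacent vertices of $H_v$ receive distinct colors, so every edge of $H$ is sent to an edge of $J$ and thus joins a pair of parts of $F$ governed by an $(\eps,\rho,p)$-dense bipartite graph. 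The extra strength of the fourth-power coloring will be used later to decorrelate the candidate-set updates triggered by simultaneously embedded vertices of the same color class.

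Next I would isolate the pseudo-randomness of $G$ that the embedding requires. With probability $1-o(1)$, $G\sim G(\tau\Tilde\Delta C's, p)$ enjoys the following property: for every tuple of at most $\Delta$ disjoint sets $U_1,\ldots,U_j\subseteq V(G)$ of size $\Omega(\log s)$ and for every target part $F_x$, the common neighborhood $F_x\cap N_G(U_1)\cap\cdots\cap N_G(U_j)$ has size within a constant factor of its expected value $p^j|F_x|$. This follows from a standard Chernoff and union bound computation; the key quantitative input is that $p^\Delta s=C^\Delta\log s$, which keeps expected common neighborhood sizes exactly in the regime where concentration is compatible with the union bound over all choices of the $U_i$, and simultaneously over all possible choices of $\cR$, $H$ and $F$.

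With both pieces in hand, I would embed $H$ into $F$ greedily in an ordering that processes one color class of each $H_v$ at a time, maintaining for each yet-to-be-embedded $x$ the candidate set
\[
  C_x=F_{\sigma(x)}\cap\bigcap_{y\in N^-(x)}N_F(\phi(y))\setminus\mathrm{Im}(\phi).
\]
At each step I would pick $\phi(x)\in C_x$ avoiding, for each unembedded neighbor $y$ of $x$, the "bad" vertices of $C_x$ whose selection would undercut $|C_y|$ below its expected value by more than a constant factor. The $(\eps,\rho,p)$-density of $(F_{\sigma(x)},F_{\sigma(y)})$ bounds the number of bad choices per such $y$ by $\eps|C_x|$, while the pseudo-randomness of $G$ controls image uniqueness and the multi-vertex interactions within a color class. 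The main obstacle is the sparsity regime: since $p^\Delta=C^\Delta\log s/s$, the candidate sets of saturated vertices are only of logarithmic size, well below the dense regime handled by the proof of \Cref{lem:smallblowupsizeramsey}. The delicate bookkeeping needed to track candidate-set trajectories under such sparse parameters, together with the role of the $\Tilde\Delta$-coloring in decorrelating simultaneous embedding steps, is exactly what we would adapt from~\cite{kohayakawa2011sparse}; the technical details would be deferred to the appendix as announced.
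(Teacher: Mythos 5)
Your high-level plan is the same as ours (partition $H$ along the product structure, split each $H_v$ into $\Tilde{\Delta}$ classes via a power-of-$H$ coloring, then embed class by class maintaining candidate sets), but two of the mechanisms you describe would fail in the sparse regime, and they are exactly the points where the real work lies.

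First, the per-vertex greedy choice cannot produce an injective embedding. A vertex $y$ whose left-degree equals $\Delta$ has candidate set of size $\Theta(p^{\Delta}s)=\Theta(\log s)$, whereas the class $W_{\ell+1}$ containing $y$ may have up to $s$ vertices; by the time you reach $y$, its candidate set can be entirely occupied by images of earlier vertices of the same class. The resolution is not a smarter greedy rule but a simultaneous assignment: one verifies Hall's condition for the system $\bigl(C(y)\bigr)_{y\in W_{\ell+1}}$ and picks a system of distinct representatives for the whole class at once. Verifying Hall's condition for large sets $Y\subseteq W_{\ell+1}$ requires an upper bound on $e_{\Gamma(k,G)}(\cF_k,U)$ in the auxiliary bipartite graph between $k$-sets of already-embedded neighbours and their common neighbourhoods (the congestion property $\mathscr{C}^k_{N,p}(\xi)$); this is also where the refinement of the coloring by left-degree matters, since the Hall argument needs all $y\in W_{\ell+1}$ to have the same number $k$ of already-embedded neighbours. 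Your fourth-power coloring spends the $\Tilde{\Delta}$ budget differently and does not provide this uniformity.

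Second, the pseudo-randomness property you invoke --- concentration of $|F_x\cap N_G(U_1)\cap\cdots\cap N_G(U_j)|$ for \emph{all} choices of disjoint sets $U_i$ of size $\Omega(\log s)$ --- is not available: for $j=\Delta$ the expectation is only $\Theta(\log s)$, so the failure probability per tuple is $e^{-\Theta(\log s)}$, which cannot beat a union bound over $e^{\Theta(\log s\cdot\log N)}$ choices of the $U_i$ (let alone over all $\cR$, $H$, $F$). Relatedly, once candidate sets shrink below an $\eps$-fraction of $F_x$, the $(\eps,\rho,p)$-density of the original pair $(F_{\sigma(x)},F_{\sigma(y)})$ gives no control over the bipartite graph between them, so your count of ``bad'' choices breaks down after the first round. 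What is actually needed, and what the invariant must carry along, is that pairs of candidate sets of future neighbours themselves remain $(\eps_j,\rho,p)$-dense; this is guaranteed by the one- and two-sided regularity-inheritance (denseness) properties of $G(N,p)$, which hold with failure probability $N^{-\Theta(Np^{\Delta-2})}$ --- small enough to union-bound only over the $O(\tau)$ relevant triples of parts, not over all sets.
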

The proof of Lemma~\ref{lem:KRSS_generalization} mostly follows the approach from~\cite{kohayakawa2011sparse}, with some changes needed for our setting. We defer it to the appendix.

We are now ready to present the proof of our second key lemma. It makes use of the same main ideas as the first one, except that to get the tighter bound, the complete bipartite graphs between the sets of the host graph are now replaced by random graphs with smaller than constant density $p$. This drop in density then requires a much more careful treatment of the embedding process of $H$, for which the embedding lemma no longer suffices. This is where Lemma~\ref{lem:KRSS_generalization} proves to be helpful.
\begin{proof}[ of \Cref{lem:blowupsizeramsey}]
Let $H$ be as given above. We shall show that there exists a graph $G$ with $O(\tau s^{2-1/\Delta}\log^{1/\Delta}s)$ many edges such that $G \rightarrow_k H$.

It will be convenient to define the following constants.
Let $\Tilde{\Delta} := \Delta^4 + 2\Delta +1$ and let 
$$\Delta, d, k \ll d' \ll \eps^{-1} \ll C' \ll C.$$ 
Let $p = C\big(\frac{\log{s}}{s}\big)^{1/\Delta}$. Let $\lambda$ be as given by Lemma~\ref{lem:get-regularity} with $k:=k$, $\eps:=\eps$, and $\Delta := d'$.

Let $\cR$ be the graph that is $k$-Ramsey for all $\tau \Tilde{\Delta}$-vertex graphs of maximum degree $\Tilde{\Delta}(d+1)$ and maximum treewidth $2\Tilde{\Delta}$ given by Theorem~\ref{cor:host-graph-bounded-treewidth}. In particular, $\cR \rightarrow_k T\boxtimes K_{\Tilde{\Delta}}$, since by the definition of treewidth, $T\boxtimes K_{\Tilde{\Delta}}$ has treewidth at most $2\Tilde{\Delta}$. Note that $\cR$ has $O(\tau)$ vertices and constant maximum degree, say $d'$. We define our \emph{host graph} $G$ to be obtained by replacing each $v \in V(\cR)$ with an independent set $V_v$ on $\lambda^{-1}C's$ vertices and each edge $uv \in E(\cR)$ with a random bipartite graph between $V_u$ and $V_v$ in which each edge is added independently at random with probability $p$. 

Now consider an arbitrary colouring of the edges of $G$ in $k$ colours. We will show that $H$ can be embedded into one of the colour classes of $G$.
By a standard application of Chernoff bounds and a union bound, with high probability each random bipartite graph between $V_u$ and $V_v$ with $uv \in E(\cR)$ is $(\lambda,p)$-uniform. We fix an outcome of $G$ for which this is the case. In particular, this also implies that $e(G) = O(\tau s^{2-1/\Delta}\log^{1/\Delta}s)$.

We can now apply Lemma~\ref{lem:get-regularity} with $\Gamma := G$, $H:= \cR$, $n:=\lambda^{-1}C's$, and $p:=p$. This yields a $k$-colouring $\phi$ of the edges of $\cR$ and, for every $x \in V(\cR)$, a subset $U_x \subseteq V_x$ of size $|U_x| = \lambda |V_x|= C's$ such that for each $xy \in E(\cR)$, the graph $(U_x, U_y; E_{\phi(xy)})$ is $(\eps, \frac{1}{2k}, p)$-dense, where $E_{\phi(xy)} \subseteq E_G$ are the edges in colour $\phi(xy)$.

Next, note that by Theorem~\ref{cor:host-graph-bounded-treewidth}, the $k$-colouring $\phi$ of the edges of $\cR$ contains a monochromatic copy of $T \boxtimes K_{\Tilde{\Delta}} =: J$. This implies that $G$ contains a monochromatic subgraph $F$ with vertex set $V(F) = \bigcup_{x \in V(J)} F_x$ with $|F_x| = C's$ for each $x \in V(J)$ and edge set consisting of an $(\eps, \frac{1}{2k}, p)$-dense bipartite graph between $F_x$ and $F_y$ for each $xy \in E(J)$.

We can now apply Lemma~\ref{lem:KRSS_generalization} with $\cR:=T$ and $\rho:=\frac{1}{2k}$, concluding that the graph $F$ contains $H$ as a subgraph.
Since the copy of $F$ in $G$ was monochromatic, this concludes the proof of the lemma. 
\end{proof}

\paragraph{Acknowledgments}
We thank David Wood for asking for the bounds on the size-Ramsey number of bounded-degree planar graphs, which led to this paper~\cite{wood2022personal}.

\appendix
\section{Appendix}
\label{sec:appendix}
Here we give the proof of Lemma~\ref{lem:KRSS_generalization}, which roughly follows~\cite{kohayakawa2011sparse}. We begin by establishing certain useful `uniformity' properties, which allow us to proceed with the embedding later. First we define two classes of `bad' tripartite graphs, $\cB^{I}_p$ and $\cB^{II}_p$, where density of pairs $(X,Y), (Y,Z)$ is not inherited on one- respectively two-sided neighborhoods.\footnote{The definition slightly deviates from Definition 13 in \cite{kohayakawa2011sparse}, where the pairs $(X,Y)$ and $(Y,Z)$ are both $(\eps,\alpha,p)$-dense. Here, for technical reasons, we include the more general case of $(\eta,\alpha,p)$-dense pairs $(X,Y)$ whereby $\eta$ can differ from $\eps$.}
\begin{defn}
\label{def:bad}
    Let integers $m_1, m_2, m_3$ and reals $\alpha, \eps', \eps, \mu >0$, $\eta>0$, and $0<p\leq 1$ be given. 
    \begin{enumerate}
        \item Let $\cB^{I}_p(m_1, m_2,m_3, \alpha, \eps', \eps, \mu, \eta)$ be the family of tripartite graphs with vertex set $X \dot\cup Y \dot\cup Z$, where $|X| = m_1$, $|Y|=m_2$, and $|Z|=m_3$, satisfying
        \begin{enumerate}
            \item $(X,Y)$ is a $(\eta, \alpha,p)$-dense pair, 
            \item $(Y,Z)$ is a $(\eps, \alpha, p)$-dense pair, and
            \item there exists $X' \subseteq X$ with $|X'| \geq \mu |X|$ such that for every $x \in X'$ the pair $(N(x) \cap Y, Z)$ is not $(\eps', \alpha,p)$-dense.
        \end{enumerate}
        \item Let $\cB^{II}_p(m_1, m_2,m_3, \alpha, \eps', \eps, \mu, \eta)$ be the family of tripartite graphs with vertex set $X \dot\cup Y \dot\cup Z$, where $|X| = m_1$, $|Y|=m_2$, and $|Z|=m_3$, satisfying
        \begin{enumerate}
            \item $(X,Y)$ and $(X,Z)$ are $(\eta, \alpha,p)$-dense pairs,
            \item $(Y,Z)$ is a $(\eps, \alpha, p)$-dense pair, and
            \item there exists $X' \subseteq X$ with $|X'| \geq \mu |X|$ such that for every $x \in X'$ the pair $(N(x) \cap Y, N(x) \cap Z)$ is not $(\eps', \alpha,p)$-dense.
        \end{enumerate}
    \end{enumerate}
\end{defn}
Graphs that do not contain large subgraphs --- induced or not --- from the two above `bad' families are then said to have a denseness property $\cD^{\Delta}_{N,p}$, which we make precise in the next definition.
\begin{defn}
    For integers $N$ and $\Delta \geq 2$ and reals $\alpha, \gamma, \eps', \eps, \mu, \eta>0$ and $0 < p \leq 1$ we say that a graph $G = (V,E)$ with $V = [N]$ has the \emph{denseness property} 
    $\cD^{\Delta}_{N,p}(\gamma, \alpha, \eps', \eps, \mu, \eta)$,
    if $G$ contains no member from
    $$ B^I_p(m^I_1, m^I_2, m^I_3, \alpha, \eps', \eps, \mu, \eta) \cup  B^{II}_p(m^{II}_1, m^{II}_2, m^{II}_3, \alpha, \eps', \eps, \mu, \eta)$$
    with $m^I_1, m^I_3 \geq \gamma p^{\Delta-1} N$ and $m_2^I, m_1^{II}, m_2^{II}, m_3^{II} \geq \gamma p^{\Delta -2} N$ as a (not necessarily induced) subgraph.
\end{defn}

The following key technical lemma which is Corollary 16 in \cite{kohayakawa2011sparse} (and follows by repeatedly applying Proposition 15 therein) guarantees that random graphs $G(N,p)$ enjoy this denseness property at various parameter scales, whenever $p \gg (\frac{\log N}{N})^{\frac{1}{\Delta}} $. It will not be enough for our purposes that this fails to hold with any probability $o(1)$. However, performing the calculations explicitly allows us to bound the failure probability precisely. The other adjustments in the proof will stem from the changes in the definition of the `bad' graphs.

\begin{lem}[Corollary 16 in \cite{kohayakawa2011sparse}]
    \label{lem:regularity-inheritance}
    For all integers $\Delta, \Delta' \geq 2$ and all reals $\alpha, \mu, \gamma, \eps^* >0$, there exist $C>1$, $\eta = \eta(\Delta, \alpha, \mu)$, and $\eps_0, \ldots,\eps_{\Delta'}$ satisfying $0 < \eps_0 \leq \ldots \leq \eps_{\Delta'} \leq \eps^*$ such that if $p > C(\log{N}/N)^{1/\Delta}$, then $Pr[G(N,p) \in \cap_{k=1}^{\Delta'} \cD^{\Delta}_{N,p}(\gamma, \alpha, \eps_k, \eps_{k-1}, \mu, \eta)] \geq 1 - N^{-\Theta(Np^{\Delta-2})}$.
\end{lem}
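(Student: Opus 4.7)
The plan is to prove this statement by iterating a single-step regularity-inheritance bound $\Delta'$ times, using binomial tail inequalities together with a careful union bound. This follows the same philosophy as the proof sketched in~\cite{kohayakawa2011sparse}, so the main task is to describe how the pieces fit together.

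\textbf{Reduction to a single-step bound.} The key claim is the following: for any fixed $\eps > 0$ and suitable parameters $\alpha, \gamma, \mu, \eta > 0$, if $p \geq C(\log N / N)^{1/\Delta}$ for $C$ large enough (in terms of all other parameters including $\eps$), then there exists $\eps' = f(\eps) \geq \eps$, with $f$ an explicit polynomial in $\eps$ whose exponents depend only on $\Delta$ and the other parameters, such that
\[
\Pr\!\bigl[G(N,p) \notin \cD^{\Delta}_{N,p}(\gamma, \alpha, \eps', \eps, \mu, \eta)\bigr] \leq N^{-\Omega(Np^{\Delta-2})}.
\]
Granting this, the full sequence is built by starting from a small base value $\eps_0$ and setting $\eps_k := f(\eps_{k-1})$ for $k = 1, \dots, \Delta'$. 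Since $\Delta'$ is constant and $f$ is polynomial, choosing $\eps_0$ sufficiently small in terms of $\eps^*$ (and enlarging $C$ correspondingly) guarantees $\eps_{\Delta'} \leq \eps^*$, and the choice $\eta = \eta(\Delta, \alpha, \mu)$ is made before this iteration begins. A final union bound over the $\Delta'=O(1)$ levels $k$ preserves the failure probability up to constant factors absorbed in the $\Omega(\cdot)$.

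\textbf{Proof of the single-step bound.} Fix a triple $(X,Y,Z)$ of vertex sets of the prescribed sizes; I treat Type~I and Type~II separately. Since we upper-bound probabilities, we may drop the density hypotheses (a)--(b) of Definition~\ref{def:bad} and simply bound the probability of condition (c). For Type~I, the edges from distinct $x \in X$ to $Y$ are mutually independent, so
\[
\Pr[\text{Type I bad for }(X,Y,Z)] \leq \binom{m_1^I}{\mu m_1^I} \cdot q_x^{\mu m_1^I},
\]
where $q_x$ is the probability that $(N(x) \cap Y, Z)$ fails to be $(\eps',\alpha,p)$-dense for any fixed $x \in X$. To bound $q_x$, I union-bound over witness pairs $Y^* \subseteq Y$, $Z^* \subseteq Z$. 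The event ``$Y^* \subseteq N(x)$ and $e(Y^*, Z^*) < (\alpha-\eps')\,p|Y^*||Z^*|$'' involves two disjoint edge sets (from $x$ to $Y^*$, and between $Y^*$ and $Z^*$), so its probability factorises as $p^{|Y^*|} \cdot \Pr[e(Y^*,Z^*) < (\alpha-\eps')p|Y^*||Z^*|]$. Assuming $\alpha < 1$ (the case $\alpha \geq 1$ can be reduced to this one, since density $>p$ almost surely fails in $G(N,p)$), Chernoff bounds the second factor by $\exp\!\bigl(-c\,p|Y^*||Z^*|\bigr)$ for some $c=c(\alpha)>0$. Combining with the concentration $|N(x)\cap Y| = (1\pm o(1))pm_2^I$ w.h.p., the size bounds $|Y^*| \gtrsim \eps' p m_2^I$, $|Z^*| \geq \eps' m_3^I$, and $m_2^I, m_3^I \geq \gamma p^{\Delta-2}N$, one gets $p|Y^*||Z^*| \gtrsim (\eps')^2 p^{2\Delta-2} N^2$. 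The combinatorial factor $\binom{m_2^I}{|Y^*|}\binom{m_3^I}{|Z^*|}\,p^{|Y^*|}$ is at most $\exp\!\bigl(O\bigl((|Y^*|+|Z^*|)\log(1/\eps')\bigr)\bigr)$, which is swallowed by the exponent $cp|Y^*||Z^*|$ precisely when $p^\Delta N \gg \log(1/\eps')$, a consequence of the hypothesis on $p$ with $C$ large. This yields $q_x \leq \exp(-\Omega(Np^{\Delta-2}))$, and the Type-I bound follows after absorbing the binomial $\binom{m_1^I}{\mu m_1^I} \leq 2^N$ and the outer union bound over $(X,Y,Z)$ (costing a further $2^{3N}$ factor) into the main exponent. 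The Type-II case is analogous, except that both $Y^*$ and $Z^*$ are required to lie in neighborhoods of $x$; this contributes an extra independent factor $p^{|Z^*|}$ to the per-witness probability, which only strengthens the estimate.

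\textbf{Main obstacle.} The delicate part of the argument is calibrating all the quantitative bounds so that the Chernoff tail decay beats the combinatorial entropy of all possible witnesses. The smallest admissible witness subsets $Y^*$ have size $\Theta(\eps' p^{\Delta-1} N)$, so the exponent $cp|Y^*||Z^*|$ scales like $p^{2\Delta-2}N^2$, while the number of such subsets and the enveloping union bound over $(X,Y,Z,X')$ contribute at most $\exp(O(N))$. The hypothesis $p \geq C(\log N/N)^{1/\Delta}$ is tailored exactly so that $p^{2\Delta-2}N^2$ dominates both $N\log N$ and $Np^{\Delta-2}\log N$ with slack proportional to $C^{\Delta}$, which lets us absorb the polynomial loss from iterating $\Delta'$ times provided $C$ is large enough in terms of $(\Delta,\Delta',\alpha,\mu,\gamma,\eps^*)$. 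Once this single-step lemma is in place, the remaining iteration and union bound are routine.
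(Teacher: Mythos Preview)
Your argument has a genuine gap in the independence step. You write $\Pr[\text{Type I bad for }(X,Y,Z)] \le \binom{m_1^I}{\mu m_1^I} q_x^{\mu m_1^I}$, justified by ``the edges from distinct $x\in X$ to $Y$ are mutually independent''. The edges are independent, but the events ``$x$ is bad'' are not: whether $(N(x)\cap Y,Z)$ fails to be $(\eps',\alpha,p)$-dense depends on the bipartite graph between $Y$ and $Z$, and those edges are \emph{shared} by all $x$. If the realization of $G[Y,Z]$ happens to contain a sparse patch, then many $x$ become bad simultaneously, so the product bound is unjustified.

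A second, related issue is your sentence ``we may drop the density hypotheses (a)--(b) \ldots and simply bound the probability of condition~(c)''. The property $\cD^\Delta_{N,p}$ forbids bad \emph{subgraphs} $T\subseteq G$, not bad induced configurations in $G$. Once you drop (b), the adversary may take $T[Y,Z]$ empty, and then (c) holds for free; your Chernoff bound on $e_G(Y^*,Z^*)$ says nothing about $e_T(Y^*,Z^*)$. Condition (b) is precisely what rules this out and cannot be discarded.

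Both issues are resolved simultaneously in the paper's (i.e.\ KRSS's) approach: one union-bounds over the edge set of $T[Y,Z]$, and then, \emph{given} an $(\eps,\alpha,p)$-dense $T[Y,Z]$, invokes the sparse hereditary counting lemma of Gerke--Kohayakawa--R\"odl--Steger (Theorem~3.6 in~\cite{gerke2007smallt}) to show that only a $\beta^{|Y_x|}\binom{|Y|}{|Y_x|}$-fraction of subsets $Y_x\subseteq Y$ of the relevant size fail to inherit density with $Z$. The remaining event---that for each $x\in X''$ the $G$-neighbourhood $N_G(x)$ covers one of these few bad subsets---now involves only $x$-to-$Y$ edges and \emph{is} independent across $x$, yielding the product structure you were aiming for. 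Without this counting lemma your direct Chernoff-plus-union-bound route does not close; indeed your own final paragraph asserts that $p^{2\Delta-2}N^2$ dominates $N$, but with $p\asymp(\log N/N)^{1/\Delta}$ one has $p^{2\Delta-2}N^2=\tilde\Theta(N^{2/\Delta})=o(N)$ for every $\Delta\ge 3$, so the $2^{O(N)}$ cost of choosing $(X,Y,Z)$ is not absorbed.
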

\begin{proof}
    The proof is almost the same as that of Corollary 16 in \cite{kohayakawa2011sparse}, which can be summarized roughly as follows. Restricting to the case of one-sided regularity inheritance, one first considers the special case of a bad graph $T$ where the two parts $X$ and $Z$ are of fixed size, namely only a $p$-fraction of the size of $Y$ (the more general case can be reduced to this one).  The bipartite subgraphs $T[X,Y]$ and $T[Y,Z]$ are quite dense by assumption. Now consider the violating set $X' \subset X$, whose members $x \in X'$ all fail to produce $(\eps',\alpha,p)$-dense pairs $(N_T(x)\cap Y,Z)$. We restrict our attention to $X'' \subseteq X'$ such that each $x\in X''$ has `large' degree into $Y$. Because $T[X,Y]$ is $(\eta,\alpha,p)$-dense, $X''$ contains at least half of the vertices in $X'$. Now one can show that each such vertex $x$ produces a set $Y'_x \subset N_T(x) \cap Y$ of size $\frac{\eps' \alpha p m}{2}$ which, together with $Z$ produces a pair $(Y'_x,Z)$ of density strictly less than $\alpha-\eps'$. Any superset $Y_x$ of $Y'_x$ in $Y$ with size $\frac{\alpha p m}{2}$ then produces a violating pair, that is, the pair $(Y_x,Z)$ is not $(\eps',\alpha,p)$-dense. By repeating this procedure for all $x \in X''$, we could produce an entire family of violating pairs, which is unlikely to occur in $G(N,p)$. The latter can be proven by a union bound via counting the ways of choosing the vertices in the involved sets $X'', Y, Z$ and the edges in the bipartite graph $T[Y,Z]$ as well as the choices of the sets $Y_x$, which are limited by sparse regularity inheritance from $T[Y,Z]$ (cf. \cite{gerke2007smallt} Theorem 3.6). \par
    In what follows we note the necessary changes to Section 3.3 \cite{kohayakawa2011sparse}.
    \begin{itemize}
        \item In the statement of Proposition 15, after $\eps = \eps(\Delta, \alpha, \eps', \mu) >0$, add "there exists $\eta = \eta (\Delta, \alpha, \mu) > 0$" and change
        $$ Pr[G(N,p) \in \cD^{\Delta}_{N,p}(\gamma, \alpha,.\eps_k, \eps_{k-1}, \mu)] = 1 - o(1)$$
        to 
        $$ Pr[G(N,p) \in \cD^{\Delta}_{N,p}(\gamma, \alpha, \eps_k, \eps_{k-1}, \mu, \eta)] \geq 1 - N^{-\Theta(Np^{\Delta-2})}.$$
        \item In the proof of Corollary 16, add the argument $\eta$ to $\cD^{\Delta}_{N,p}(\gamma, \alpha, \eps_k, \eps_{k-1}, \mu)$.
        \item Change the definition of $\hat{\cD}^{\Delta}_{N,p}$ to include $\eta$ as an argument, in agreement with $\cD^{\Delta}_{N,p}$.
        \item In Proposition 17, add "for each $0<\eta <\frac{\mu}{100}, \frac{\alpha}{100}$" and  change the probability condition to $Pr[G(N,p) \in \hat{\cD}^{\Delta}_{N,p}(\gamma, \alpha, \eps',\eps, \mu, \eta)] \geq 1- N^{-\Theta(Np^{\Delta-2})}$. In particular, $\eta <\frac{\mu}{100}$ ensures that $|X''|$ is large enough, see the change below on the lower bound at the top of page 5052.
        \item In the proof of Proposition 17:
            \begin{itemize}
                \item Choose $C$, which was originally set to $C=(\frac{4}{\gamma})^{\frac{1}{\Delta}}$, to be, say, $100$ times larger. This will ensure the desired rate of decay later on in the proof, where the failure probability is estimated by a bound where $p$, which depends linearly on $C$, enters as $p^2$ in the exponent. See also the change on page 5051 indicated below.
                \item Add $\eta$ as an argument to $\cB^{I}_p$ and $\cB^{II}_p$ everywhere since $T[X,Y]$ is $(\eta,\alpha,p)$-dense according to Definition \ref{def:bad} instead of $(\eps,\alpha,p)$-dense as in \cite{kohayakawa2011sparse}.
                \item On page 5050, "Because of the assumption on $T$, the bipartite subgraphs $T[X,Y]$ and $T[Y,Z]$ of $T$ contain at least $(\alpha-\eps)p^2m^2$ edges each" --- this continues to hold for $T[Y,Z]$ but is no longer true of $T[X,Y]$, instead it contains at least $(\alpha-\eta)p^2m^2$ edges.
                \item On page 5050, "From the $(\eps, \alpha,p)$-denseness ..." towards the bottom of the page changes to "From the $(\eta, \alpha,p)$-denseness ..." and $|X''| \geq (1-\eps/\mu) |X'|$ changes to $|X''| \geq (1-\eta/\mu) |X'|$.
                \item On page 5051, we get that the probability that $T[X'', Y,Z]$ appears in $G(N,p)$ is at most $N^{-\Theta(Np^{\Delta-2})}$ as a result of the new choice of $C$.
                \item On top of page 5052, it is no longer true that $T[X,Y]$ and $T[X,Z]$ have at least $(\alpha-\eps)pm^2$ edges each, they have instead at least $(\alpha-\eta)pm^2$ edges each.
                \item On top of page 5052, in the lower bound for $|X''|$, we get instead $(1-2\eta/\mu)|X'| \geq |X'|/2$, from the $(\eta,\alpha,p)$-denseness of $T[X,Y]$ and $T[X,Z]$.
                \item On page 5052, we get that the probability that a graph from $\cB_p^{II}(m,\alpha,\eps',\eps,\mu,\eta)$ is contained in $G(n,p)$ is at most $N^{-\Theta(Np^{\Delta-2})}$.
            \end{itemize}
        \item In the proof of Proposition 15:
        \begin{itemize}
            \item Change the third sentence to "Indeed, roughly speaking, we show that each 'bad' tripartite graph $T \in \cB_p^{II}(m_1, m_2, m_3, \alpha, \eps', \eps, \mu,\eta)$ with arbitrary $m_1,m_2,m_3 \geq m$ contains a subgraph $\hat{T} \in \cB_p^{II}(m,\alpha,\eps'/2,\hat{\eps}, \mu/4, \hat{\eta})$ for some appropriate $\hat{\eps}$ and $\hat{\eta}$".
            \item In Claim 18, add $\hat{\eta}$ to the list of given positive reals, add "there exists $\eta = \eta(\hat{\eta}, \alpha, \mu)$", and add $\eta$ as an argument to $\cB_p^{II}(m_1, m_2,m_3,\alpha,\eps', \eps, \mu)$ and $\hat{\eta}$ as an argument to $\cB_p^{II}(m,\alpha,\eps'/2,\hat{\eps},\mu/4)$.
            \item In the paragraph after Claim 18, add "Pick $\hat{\eta} < \frac{\mu}{400}, \frac{\alpha}{400}$ for the application of Claim 18", and add $\hat{\eta}$ as an argument to $\cB^{I}_p$ and $\cB^{II}_p$; also change the probability to $1 - N^{-\Theta(Np^{\Delta-2})}$.
        \end{itemize}
        \item In the proof of Claim 18:
        \begin{itemize}
            \item Add $\hat{\eta}$ to the list of given constants, and take $\eta := \min\{\frac{\alpha}{400}, \frac{\mu}{400}, \eps_0(\alpha, \beta, \hat{\eta})\}$.
            \item On page 5053, in the definition of $T = (X \dot\cup Y \dot\cup Z, E_T)$, add $\eta$ as an argument to $\cB_p^{II}$.
            \item On page 5053, replace $\eps$ with $\eta$ everywhere in the sentence "Owing to the choice of $\eps$ ..." (we can do this since $\eta <\frac{\mu}{100}$).
            \item On the bottom of page 5053, change the last sentence of the penultimate paragraph to "We shall show that with positive probability $\hat{T}$ is from $\cB_p^{II}(m,\alpha, \eps'/2, \hat{\eps}, \mu/4, \hat{\eta})$.
            \item In the last paragraph of page 5053, replace $\hat{\eps}$ with $\hat{\eta}$ when it comes to pairs $(\hat{X}, \hat{Y})$ and $(\hat{X}, \hat{Z})$.
            \item On the bottom of page 5054, add $\hat{\eta}$ as an argument to $\cB_p^{II}(m,\alpha,\eps'/2,\hat{\eps},\mu/4)$.
        \end{itemize}
    \end{itemize}
\end{proof}

Let $G=(V,E)$ be a graph and $k \geq 1$ be an integer. We define an auxiliary bipartite graph $\Gamma(k, G) = ( \binom{V}{k} \dot\cup V, E_{\Gamma(k,G})$ by
$$ (K,v) \in E_{\Gamma(k,G)} \Longleftrightarrow  \{w,v\}\in E(G) \text{ for all } w\in K.$$
The next definition and lemma will help us conclude that if $G$ is the random graph $G(N,p)$, then $\Gamma(k, G)$ has no "dense parts".
\begin{defn}
Let integers $N$ and $k\geq 1$ and reals $\xi>0$ and $0<p\leq 1$ be given. A graph $G=(V,E)$ with $V=[N]$ has the \emph{congestion property} $\mathscr{C}^k_{N,p}(\xi)$ if for every $U \subseteq V$ and every family $\cF_k \subseteq \binom{V \setminus U}{k}$ of pairwise disjoint $k$-sets with
\begin{itemize}
    \item $|\cF_k| \leq \xi N$ and
    \item $|U| \leq |\cF_k|$
\end{itemize}
we have
$$ e_{\Gamma(k,G)}(\cF_k, U) \leq p^k |\cF_k||U| + 6 \xi N p^k |\cF_k|.$$
\end{defn}
Similarly to the denseness property from Lemma \ref{lem:regularity-inheritance}, we will require better control over the probability that a graph fails to have the congestion property than the original phrasing of Corollary 12 in \cite{kohayakawa2011sparse}. We will hence adjust the proof accordingly.

\begin{lem}[Corollary 12 in~\cite{kohayakawa2011sparse}]
\label{lem:auxiliary-graph-not-too-dense}
For every integer $\Delta \geq 1$ and real $\xi>0$, there exists $C>1$ such that if $p > C (log{N} / N)^{1/\Delta}$, then $Pr[G(N,p) \in \cap_{k=1}^{\Delta}\mathscr{C}_{N,p}^{k}(\xi) ] = 1- e^{-100\log^2{N}}$.
\end{lem}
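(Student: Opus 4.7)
The plan is to adapt the proof of Corollary 12 from \cite{kohayakawa2011sparse}, tracking the failure probability explicitly (analogously to the adjustments made for Lemma~\ref{lem:regularity-inheritance}), with a sufficiently large choice of the constant $C$. First, I would fix $k \in \{1, \ldots, \Delta\}$ and a configuration $(\mathcal{F}_k, U)$ satisfying the constraints of $\mathscr{C}^k_{N,p}(\xi)$, setting $f := |\mathcal{F}_k|$ and $u := |U| \leq f \leq \xi N$. The key structural observation is that, because the $k$-sets in $\mathcal{F}_k$ are pairwise disjoint and $U \subseteq V \setminus \bigcup_{K \in \mathcal{F}_k} K$, the $fu$ indicator variables $\mathbf{1}[K \subseteq N_G(v)]$ for $(K, v) \in \mathcal{F}_k \times U$ each depend on a distinct, pairwise disjoint set of $k$ potential edges of $G$. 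Hence they are mutually independent $\mathrm{Bernoulli}(p^k)$ variables, and the random variable $X := e_{\Gamma(k, G)}(\mathcal{F}_k, U)$ is distributed as $\mathrm{Bin}(fu, p^k)$ with mean $\mu := fup^k$.

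The crucial second step is a vacuity observation: since $X \leq fu$, the failure event $\{X > \mu + 6\xi N p^k f\}$ can occur only when $u > 6\xi N p^k / (1 - p^k)$. The hypothesis $p > C(\log N / N)^{1/\Delta}$ yields $Np^k \geq C^\Delta \log N$ in the worst case $k = \Delta$, so the event forces $u > 6\xi C^\Delta \log N$, and hence $f \geq u \geq \log N$ once $C$ is sufficiently large in terms of $\xi$. For such configurations I would apply the Chernoff bound $\Pr[X \geq a] \leq (\mathrm{e}\mu/a)^a$ at $a := p^k f(u + 6\xi N)$, which simplifies to $\Pr \leq \bigl(\mathrm{e} u / (u + 6\xi N)\bigr)^{p^k f (u + 6\xi N)}$. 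Using $u \leq \xi N$ to bound the base by $\mathrm{e}/7$ and the exponent from below by $6\xi N p^k f$, this gives $-\log \Pr \geq c \xi C^\Delta f \log N$ for an absolute constant $c > 0$. A union bound over the at most $(f+1) N^{(k+1)f}$ configurations $(\mathcal{F}_k, U)$ with $|\mathcal{F}_k| = f$ and $|U| \leq f$ then yields failure probability at most $N^{((k+1) - c\xi C^\Delta) f + 1}$ for fixed $k$ and $f$; taking $C$ large enough that $c\xi C^\Delta - (k + 1) \geq 200$ makes this $N^{-199 f}$. Summing over $f \geq \log N$ and $k \in \{1, \ldots, \Delta\}$ produces the desired total failure probability of at most $e^{-100 \log^2 N}$.

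The main obstacle is that the Chernoff estimate is relatively weak when $u$ is close to $\xi N$: the slack $6\xi N p^k f$ is only a constant factor above $\mu$, while $\binom{N}{u}$ in the union bound can be very large in that regime. The vacuity observation resolves this tension cleanly: configurations with large $u$ force correspondingly large $f$, and the Chernoff exponent's linear dependence on $f \log N$ with coefficient $\Omega(C^\Delta)$ then dominates the combinatorial count $N^{(k+1) f}$ by an arbitrarily wide margin, controlled by our choice of $C$.
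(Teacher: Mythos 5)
Your proposal is correct and follows essentially the same route as the paper, which simply defers to the Chernoff-plus-union-bound argument of Proposition 11/Corollary 12 in \cite{kohayakawa2011sparse} and notes that the failure probabilities there can be quantified as $e^{-\Theta(\log^2 N)}$ (resp.\ $e^{-\Theta(N)}$) for $C$ large enough. Your reconstruction --- independence of the $fu$ Bernoulli$(p^k)$ indicators from disjointness, the vacuity observation forcing $f\ge u \gtrsim \xi C^{\Delta}\log N$, and the union bound dominated by the Chernoff exponent $\Omega(\xi C^{\Delta} f\log N)$ --- is a valid, self-contained version of that same computation.
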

\begin{proof}
    The proof is almost the same as that of Corollary 12 in~\cite{kohayakawa2011sparse}. In what follows we note the necessary changes in Section 3.2.
    \begin{itemize}
        \item In the statement of Proposition 11, change $o(1)$ to $e^{-200\log^2{N}}$.
        \item At the end of the analysis for Case 1 in the proof of Proposition 11, note that the probability of the bad event can in fact be upper bounded by $e^{-2N}$.
        \item At the end of Case 2 in the proof of Proposition 11, one can upper bound the probability of the bad event by $e^{-400\log^2{N}}$, for $C$ large enough.
    \end{itemize}
\end{proof}

We are now ready to prove \Cref{lem:KRSS_generalization}, which we restate here for convenience of the reader.
\KRSSgeneralizationlem*

\begin{proof}[ of Lemma~\ref{lem:KRSS_generalization}]
We begin the proof by setting the constants. This will allow for the application of Lemmas \ref{lem:regularity-inheritance} and \ref{lem:auxiliary-graph-not-too-dense} at the necessary level. In particular, the subgraphs of our host graph will have denseness and congestion properties which still hold after taking union bounds. The second step will consist of preparing our graph $H$ for its embedding. Here we use the assumption that $H$ is a subgraph of $\cR\boxtimes K_s$. We first partition the vertices of $H$ according to the copy of $K_s$, indexed by the vertices of $\cR$, into which they are mapped. This ordered partition is then refined using a coloring (of the third power of $H^3$), ensuring that no two vertices at distance at most $3$ with respect to $H$ are mapped to the same class. The actual embedding of $H$ into $F$ is then constructed inductively, maintaining at each step $\ell$ for every not yet embedded vertex $z$ of $H$ a large enough candidate set $C_{\ell}(z)$ which takes into account all the already embedded neighbors of $z$, and ensuring that the candidate sets for pairs of not yet embedded vertices $z, z'$ that are neighbors in $H$ have candidate sets $C_{\ell}(z),C_{\ell}(z')$ which induce an appropriately regular pair in $F$. Given a vertex $y\in H$ and a candidate set $C_{\ell}(y)$, a naive embedding of $y$ into it will not work, as previously embedded vertices from the same class may have exhausted the set. Instead, we use Hall's condition to find one distinct vertex in $C_{\ell}(y)$ for each $y$ in the current vertex class of $H$ simultaneously. \par
\textbf{Notation and Constants.} We first fix some notation. In $J = \cR \boxtimes K_{\Tilde{\Delta}}$, for each $x \in V(\cR)$, we denote by $J_x$ the copy of $K_{\Tilde{\Delta}}$ that corresponds to $x$, and by $\mathbf{F}_x$ the copy of $K_{\Tilde{\Delta}} \boxtimes K_{C's}$ in $F$ that corresponds to $J_x$. 

Next, we choose the constants. Let $\eps_0 := \eps$, $\mu :=\frac{1}{4\Delta^2}$, $\xi:= \frac{1}{(d+1)\Tilde{\Delta}C'}$ and $\gamma:=\frac{1}{(2/\rho)^{\Delta-1}3\Tilde{\Delta}}$. Let $\eta$ be as given by Lemma~\ref{lem:regularity-inheritance} applied with $\Delta:=\Delta$, $\alpha := \rho$, $\mu:=\mu$ and 
$$ 0 < \eps_0 \leq \ldots \leq \eps_{2\Delta} \leq \eps^* := \min\{\frac{\rho}{6 \Tilde{\Delta}}, \eta\}.$$
be as given by Lemma~\ref{lem:regularity-inheritance} applied with $\Delta:=\Delta$, $\Delta' := 2\Delta$, $\alpha := \rho$, $\mu:=\mu$, $\gamma:=\gamma$. 
The value of $\eps^*$ ensures in particular that in the embedding which we will construct step by step, the candidate sets for not yet embedded vertices are large enough. The choice of $\gamma$ will ensure that the candidate sets are large enough to form dense pairs. The choice of $\xi$ will enable us to verify Hall's condition.

For each distinct $x,y,z \in V(\cR)$ such that $xy,yz \in E(\cR)$, apply Lemma~\ref{lem:regularity-inheritance} with $\Delta:=\Delta$, $\Delta' := 2\Delta$, $\alpha := \rho$, $\mu:=\mu$, $\gamma:=\gamma$, $\eps^*:=\eps^*$ and $N:=3 \Tilde{\Delta}C's$ to $F[\mathbf{F}_{x} \cup \mathbf{F}_{y} \cup \mathbf{F}_{z}]$, which is a subgraph of $G(N,p)$. By a union bound over all at most $O(t)$ such triples $x,y,z \in V(\cR)$, we conclude that w.h.p. for each $xy, yz \in E(\cR)$, we have $F[\mathbf{F}_{x} \cup \mathbf{F}_{y} \cup \mathbf{F}_{z}]\in \cap_{k=1}^{2\Delta} \cD^{\Delta}_{N,p}(\gamma, \rho, \eps_k, \eps_{k-1}, \mu, \eta)$, since
$$O(t) s^{-\Theta(sp^{\Delta-2})} = O(n) e^{-\Theta(sp^{\Delta-2} \log{s})} = O(n) e^{-\Theta(s^{2/\Delta} \log^{(2\Delta-2)/\Delta}{s})} = O(n)e^{-\omega(\log{n})} = o(1).$$
From now on we condition on this being the case.

Similarly, for each $x \in V(\cR)$, we apply Lemma~\ref{lem:auxiliary-graph-not-too-dense} with $\Delta:=\Delta$, $\xi:=\xi$ and $N:=\big(\mathsf{deg}_{\cR}(x)+1\big) \Tilde{\Delta}C's$ to $F[\bigcup_{y \in N_{\cR}(x)} \mathbf{F}_y \cup \mathbf{F}_x]$, which we can do since it is a subgraph of $G(N,p)$. We next apply a union bound over all $t$ such vertices $x \in V(\cR)$, to show that the probability that for some $x \in V(\cR)$, we have $F[\bigcup_{y \in N_{\cR}(x)} \mathbf{F}_y \cup \mathbf{F}_x] \notin \cap_{k=1}^{\Delta} \mathscr{C}_{N,p}^k(\xi)$ is at most
$$ t e^{-100\log^2(2\Tilde{\Delta}C's)} \leq n e^{-100\log^2{s}} \leq ne^{-100\log^2(\Omega(e^{\sqrt{\log{n}}}))} \leq n e^{-25\log{n}} = o(1).$$
From here on we condition on this event not occurring.

\textbf{Preparation of $H$.} We start by preparing $H$ for the embedding. For this, consider an embedding $\psi: H \rightarrow \cR\boxtimes K_s$. We partition $V(H)$ into classes $\{S_x\}_{x \in V(\cR)}$ with $v \in S_x$ for $x \in V(\cR)$ if $\psi(v)$ belongs to the copy of $K_s$ in $\cR \boxtimes K_s$ that corresponds to $x$. We denote $H_x:=H[S_x]$. Let $q$ be an arbitrary "root" of $\cR$ and consider a breadth-first ordering of $V(\cR)$ starting from $q$, namely $q=x_1, \ldots, x_t$. We will embed $S_x$ into $\mathbf{F}_x$ for each $x = x_1, \ldots, x_t$ in this order. For each $x \in V(\cR)$, let $p_1(x),  \ldots, p_{d_x}(x)$ be the "parents" of $x$, namely those of its neighbours that come before $x$ in the order $x_1, \ldots, x_t$. Note that $0 \leq d_x \leq d$. With slight abuse of notation, denote $S_{p(x)} := \bigcup_{i=1}^{d_x} S_{p_i(x)}$.

Consider the third power $H^3$ of $H$, that is, $uv \in E(H^3)$ if and only if $u\neq v$ and there is a path between $u$ and $v$ in $H$ consisting of at most $3$ edges. Note that $\Delta (H^3)\leq \Delta+\Delta(\Delta-1)+\Delta(\Delta-1)^2 = \Delta^3 - \Delta^2 + \Delta $ and therefore $\chi(H^3) \leq \Delta^3 - \Delta^2 + \Delta + 1$. Fix some $x \in V(\cR)$ and let $f_x$ be a $\Delta^3 - \Delta^2 + \Delta + 1$-vertex colouring of $H^3[S_x]$. Then $f_x$ partitions $S_x$ into $\Delta^3 - \Delta^2 + \Delta + 1$ classes such that if two vertices $u,v$ are in the same class, they have distance at least $4$ in $H$, and so there are no edges between $N_H(u)$ and $N_H(v)$. Next, we refine this partition depending on the "left-degrees" of the vertices. We say that $u, v \in S_x$ are equivalent if $f_x(u) = f_x(v)$ and 
$$ |N_H(u) \cap \Big( \{w \in S_x : f_x(w) < f_x(u)\}\cup S_{p(x)} \Big) | = |N_H(v) \cap \Big(\{w \in S_x : f_x(w) < f_x(v)\} \cup S_{p(x)}\Big) |.$$
Note that vertices of $S_x$ will be embedded in order of increasing colour according to $f_x$. Thus, two vertices in $S_x$ are equivalent if they have the same colour by $f_x$ and the same number of neighbours that will be embedded before them. 
This equivalence relation partitions $S_x$ into at most $(\Delta^3 - \Delta^2 + \Delta + 1)(\Delta+1) = \Tilde{\Delta}$ many classes. Denote the partition classes by $W^x_1, \ldots, W^x_{\Tilde{\Delta}}$, some of which may be empty, and let $g_x:S_x \rightarrow [\Tilde{\Delta}]$ be the corresponding partition function, that is, for $w \in S_x$, we have $g_x(w) = j$ if and only if $w \in W^x_j$. Note that $W^x_1, \ldots, W^x_{\Tilde{\Delta}}$ are ordered in increasing order of $f_x$. Each $W^x_i$ will be embedded into a different $F_y$ with $y \in J_x$, but for simplicity we now rename these vertex classes so that we can treat all of them at the same time.

Set $Q := t \Tilde{\Delta}$. Then let $W_1, \ldots, W_{Q}$ be obtained by concatenating the sequences $(W^{x_1}_1, \ldots, W^{x_1}_{\Tilde{\Delta}}),$ $ \ldots, (W^{x_t}_1, \ldots, W^{x_t}_{\Tilde{\Delta}})$ in this order, and again let $g: V(H) \rightarrow [Q]$ be the corresponding partition function, that is, $g(w) = j$ if and only if $w \in W_j$ for $w \in V(H)$. Thus, if $g(u) = g(v)$, then
$$|N_H(u) \cap \{w \in V(H): g(w) < g(u)\}|=|N_H(v) \cap \{w \in V(H): g(w) < g(v)\}|.$$
Let $u \in V(H)$ and $\ell \leq g(u)$. Then we define the \emph{left-degree} of $u$ with respect to $g$ and $\ell$ as
$$ \mathsf{ldeg}^{\ell}_g(u) := |N_H(u) \cap \{w \in V(H): g(w) \leq \ell \}|. $$

\textbf{Inductively embedding $H$ into $F$.} We proceed to embed $H$ into $F$. For this, we first relabel the vertex classes $\{F_y\}_{y \in V(J)}$ by ordering them as $F_1, \ldots, F_{Q}$, where in the order $x_1, \ldots, x_{t}$ of $V(\cR)$ that we fixed above, for each $x_i$ we have that $\{F_y\}_{y \in J_{x_i}}$ are relabelled as $F_{(i-1)\Tilde{\Delta} + 1}, \ldots, F_{i \Tilde{\Delta}}$ (in arbitrary order). We will proceed by induction, embedding $W_i$ into $F_i$ for each $i = 1, \ldots, Q$. Note that even though $J$ is not a complete graph, due to the fact that $H$ is a subgraph of $\cR \boxtimes K_s$ and the way we picked $W_1, \ldots, W_{Q}$ and $F_1, \ldots, F_{Q}$, we have that if $u \in W_i$ and $v \in W_j$ and $uv \in E(H)$, then there is an $(\eps_0, \rho, p)$-dense bipartite graph between $F_i$ and $F_j$.

Throughout our embedding, we make sure that the following statement $(\cS_{\ell})$ holds for $\ell = 0, 1, \ldots, Q$:
\begin{align*}
    (\cS_{\ell}) & \text{ There exists a partial embedding } \phi_{\ell} \text{ of } H[\cup_{j=1}^{\ell} W_j] \text{ into } F[\cup_{j=1}^{\ell} F_j] \text{ such that for every }\\
    &  z\in \cup_{j=\ell+1}^{Q}W_j  \text{ there exists a candidate set } C_{\ell}(z) \subseteq V(F) \text{ given by }\\
    & \text{(a) } C_{\ell}(z) = \cap \{N_F(\phi_{\ell}(x)): x \in N_H(z) \text{ and } g(x) \leq \ell \} \cap F_{g(z)}, \\
    & \text{satisfying} \\
    & \text{(b) } |C_{\ell}(z)| \geq \Big(\frac{\rho p}{2}\Big)^{\mathsf{ldeg}^{\ell}_g(z)}m \text{, where } m=|F_{g(z)}| = C's, \text{ and} \\
    & \text{(c) for every edge } \{z,z'\} \in E(H) \text{ with } g(z), g(z') > \ell \text{ the pair } (C_{\ell}(z), C_{\ell}(z')) \text{ is } \\
    & \hspace{0.6cm} (\eps_{\mathsf{ldeg}_g^{\ell}(z) + \mathsf{ldeg}_g^{\ell}(z')}, \rho, p)\text{-dense in } F.\\
\end{align*}

The inductive statement $(\cS_{\ell})$ tells us that the first $\ell$ classes $W_1, \ldots, W_{\ell}$ of $H$ can be embedded into $F_1 \cup \ldots \cup F_{\ell}$ in such a way that for every not yet embedded vertex $z \in W_{\ell+1} \cup \ldots \cup W_Q$, there is a large enough candidate set $C_{\ell}(z)$ that consists of the intersection of $F_{g(z)}$ and the neighbourhoods of all already embedded neighbours of $z$. The third condition of $(\cS_{\ell})$ additionally ensures that all pairs of candidate sets inherit regularity.

Note that from $(\cS_Q)$ it follows that $H$ can be embedded into $F$, which means that showing that $(\cS_0), \ldots, (\cS_{Q})$ hold completes the proof of Lemma~\ref{lem:KRSS_generalization}. We next prove that $(\cS_{\ell})$ holds for $\ell = 0,1, \ldots, Q$ by induction.

\textit{Basis of the induction:} $\ell=0$. We then have that $\phi_0$ is the empty embedding and $C_0(z) = F_{g(z)}$ for every $z \in V(H)$ by (a). This implies that property (b) also holds, as $|C_0(z)|=m$ and $\mathsf{ldeg}^0_g(z) = 0$. Property (c) holds since, as mentioned above, for each $z z' \in E(H)$, there is an $(\eps_0, \rho, p)$-dense graph between $F_{g(z)}=C_0(z)$ and $F_{g(z')} = C_0(z')$. 

\textit{Induction step:} $\ell \rightarrow \ell+1$. Suppose that $(\cS_{\ell})$ holds for some $\ell < Q$. We will now extend $\phi_{\ell}$ to an embedding $\phi_{\ell+1}$ that satisfies (a), (b), and (c) to show that $(\cS_{\ell+1})$ holds. We do this in the following way. Firstly, for each $y\in W_{\ell+1}$, we identify a subset $C(y) \subseteq C_{\ell}(y)$ such that if $y$ is embedded into some vertex from $C(y)$, then for every "right-neighbour" $z$ of $y$ in $H$, the candidate set $C_{\ell+1}(z):=C_{\ell}(z) \cap N_F(\phi_{\ell+1}(y))$ will be such that properties (b) and (c) will still hold.

However, since if $\mathsf{ldeg}^{\ell}_g \geq 1$, we have $|C(y)| \leq |C_{\ell}(y)| = o(s) \ll |W_{\ell + 1}|$, we cannot greedily pick an embedding of each $y$ into $C(y)$, as this way some $C(y)$ may be entirely occupied by other embeddings before we get to embed it. Thus, in a second step we will use Hall's condition to find distinct representatives for $\{C(y): y\in W_{\ell+1}\}$, and then we will set $\phi_{\ell+1}(y)$ to be the representative of $C(y)$. We now describe these two steps in detail.

Let $y \in W_{\ell+1}$, and set
$$ N_H^{\ell+1}(y) := \{ z \in N_H(y) : g(z) > \ell+1 \}. $$
We say that $v \in C_{\ell}(y)$ is \emph{bad} (that is, it will not be in $C(y)$) if there is some vertex $z \in N_H^{\ell+1}(y)$ such that the set $N_F(v) \cap C_{\ell}(z)$ does not satisfy condition (b) or (c) of $(\cS_{\ell+1})$ and therefore cannot become $C_{\ell+1}(z)$.

We start by focusing on (b) of $(\cS_{\ell+1})$. Let $z \in N_H^{\ell+1}(y)$. We know by (c) of $(\cS_{\ell})$ that $(C_{\ell}(y), C_{\ell}(z))$ is an $(\eps_{\mathsf{ldeg}_g^{\ell}(y) + \mathsf{ldeg}_g^{\ell}(z)}, \rho, p)$-dense pair. Thus, there are at most $\eps_{\mathsf{ldeg}_g^{\ell}(y) + \mathsf{ldeg}_g^{\ell}(z)}|C_{\ell}(y)| \leq \eps_{2\Delta} |C_{\ell}(y)|$ vertices $v$ in $C_{\ell}(y)$ such that
$$ |N_F(v) \cap C_{\ell}(z)| < \Big(\rho - \eps_{\mathsf{ldeg}_g^{\ell}(y) + \mathsf{ldeg}_g^{\ell}(z)} \Big)p |C_{\ell}(z)|.$$
Considering all $z\in N_H^{\ell+1}(y)$ in the same way, it follows that for all but at most $\Delta \eps_{2\Delta} |C_{\ell}(y)|$ vertices $v \in C_{\ell}(y)$, the following holds for all $z \in N_H^{\ell+1}(y)$:
$$ |N_F(v) \cap C_{\ell}(z)| \geq \Big(\rho - \eps_{2\Delta}\Big)p |C_{\ell}(z)| $$
$$ \geq \Big( \rho - \eps_{2\Delta} \Big)p \Big( \frac{\rho p}{2}\Big)^{\mathsf{ldeg}^{\ell}_g(z)} |F_{g(z)}| \geq \Big( \frac{\rho p}{2} \Big)^{\mathsf{ldeg}_g^{\ell+1}(z)}|F_{g(z)}|,$$
where for the penultimate inequality we used condition (b) of $(\cS_{\ell})$ and the final inequality follows from our choice of $ \eps^*\geq \eps_{2\Delta}$.

Next, we consider property (c) of $(\cS_{\ell+1})$. Let $e=\{z,z'\}$ with $g(z), g(z') > \ell+1$ and with at least one of $z, z'$ in $N_H^{\ell+1}(y)$. The number of these edges is at most $\Delta (\Delta-1) < \Delta^2$. Let $\tilde{y}, \tilde{z}, \tilde{z}' \in V(\cR)$ be such that $F_{g(y)} \subseteq \mathbf{F}_{\tilde{y}}, F_{g(z)} \subseteq \mathbf{F}_{\tilde{z}}, F_{g(z')} \subseteq \mathbf{F}_{\tilde{z}'}$, and let $j := \mathsf{ldeg}^{\ell}_g(z)+ \mathsf{ldeg}^{\ell}_g(z')$. If $z, z' \in N_H^{\ell}(y)$, we have
$$ \max \{\mathsf{ldeg}_g^{\ell}(y), \mathsf{ldeg}_g^{\ell}(z), \mathsf{ldeg}_g^{\ell}(z')\} \leq \Delta -2,$$
because each of $y,z,z'$ has at least two neighbours in $W_{\ell+1} \cup \ldots \cup W_Q$. Property (b) of $(\cS_{\ell})$ then implies
$$ \min \{|C_{\ell}(y), C_{\ell}(z), C_{\ell}(z')|\} \geq \Big( \frac{\rho p}{2} \Big)^{\max\{\mathsf{ldeg}_g^{\ell}(y),\mathsf{ldeg}_g^{\ell}(z),\mathsf{ldeg}_g^{\ell}(z')\}} C' s \geq \gamma p^{\Delta -2} 3 \Tilde{\Delta}C's,$$
by our choice of $\gamma$. Since either $\tilde{y} = \tilde{z}$ or $\tilde{y}\tilde{z} \in E(\cR)$ (and the same holds for $\tilde{z}, \tilde{z}'$ and $\tilde{y}, \tilde{z}'$), we know that $F[\mathbf{F}_{\tilde{y}} \cup \mathbf{F}_{\tilde{z}} \cup \mathbf{F}_{\tilde{z}'}]$  is a subgraph of some graph in $\cD_{3\Tilde{\Delta}C's,p}^{\Delta}(\gamma, \rho, \eps_{j+1}, \eps_j, \mu, \eta)$. Thus, there are at most $\mu |C_{\ell}(y)|$ vertices $v$ in $C_{\ell}(y)$ such that $\big(N_F(v) \cap C_{\ell}(z), N_F(v) \cap C_{\ell}(z')\big)$ is not $(\eps_{j+1},\rho,p)$-dense.

Suppose now that only one of $z,z'$ is in $N_H^{\ell+1}(y)$, say $z \in N_H^{\ell+1}(y)$ and $z' \notin N_H^{\ell+1}(y)$. Then we have
$$ \max \{\mathsf{ldeg}^{\ell}_g(y), \mathsf{ldeg}^{\ell}_g(z')\} \leq \Delta-1 \text{ and } \mathsf{ldeg}_g^{\ell}(z)\leq \Delta-2.$$
Therefore, analogously to above,
$$ \min\{|C_{\ell}(y)|, |C_{\ell}(z')|\} \geq \gamma p^{\Delta-1} 3\Tilde{\Delta}C's \text{ and } |C_{\ell}(z)| \geq \gamma p^{\Delta-2} 3\Tilde{\Delta}C's.$$
Again, since either $\tilde{y} = \tilde{z}$ or $\tilde{y}\tilde{z} \in E(\cR)$ (and the same holds for $\tilde{z}, \tilde{z}'$), $F[\mathbf{F}_{\tilde{y}} \cup \mathbf{F}_{\tilde{z}} \cup \mathbf{F}_{\tilde{z}'}]$  is a subgraph of some graph in $\cD_{3\Tilde{\Delta}C's,p}^{\Delta}(\gamma, \rho, \eps_{j+1}, \eps_j, \mu, \eta)$. Thus, there are at most $\mu |C_{\ell}(y)|$ vertices $v \in C_{\ell}(y)$ such that $\big( N_F(v)\cap C_{\ell}(z), C_{\ell}(z')\big)$ is not $(\eps_{j+1},\rho, p)$-dense.

Note that in both cases, $\mathsf{ldeg}^{\ell+1}_g(z) + \mathsf{ldeg}^{\ell+1}_g(z') \geq \mathsf{ldeg}^{\ell}_g(z) + \mathsf{ldeg}^{\ell}_g(z') + 1 = j+1$, and so any pair that is $(\eps_{j+1},\rho, p)$-dense is also $(\eps_{\mathsf{ldeg}^{\ell+1}_g(z) + \mathsf{ldeg}^{\ell+1}_g(z')},\rho, p)$-dense. For some fixed $v \in C_{\ell}(y)$, set $\hat{C}_{\ell}(z) = C_{\ell}(z) \cap N_F(v)$ if $z \in N_H^{\ell+1}(y)$ and $\hat{C}_{\ell}(z) = C_{\ell}(z)$ if $z \notin N_H^{\ell+1}(y)$, and define $\hat{C}_{\ell}(z')$ in the same way.

What we have shown so far is that there are at least
$$ (1 - \Delta \eps_{2\Delta} - \Delta^2 \mu) |C_{\ell}(y)| $$
vertices $v \in C_{\ell}(y)$ with the properties
\begin{align*}
    \text{(b') } & |N_H(v) \cap C_{\ell}(z)| \geq \Big( \frac{\rho p}{2} \Big)^{\mathsf{ldeg}_g^{\ell+1}(z)}|F_{g(z)}| \text{ for each } z \in N_H^{\ell+1}(y) \text{ and}\\
    \text{(c') } & \big(\hat{C}_{\ell}(z), \hat{C}_{\ell}(z')\big) \text{ is } (\eps_{\mathsf{ldeg}^{\ell+1}_g(z) + \mathsf{ldeg}^{\ell+1}_g(z')}, \rho,p)\text{-dense for all edges } \{z,z'\} \text{ of } H \text{ with }\\
    & g(z), g(z') > \ell +1 \text{ and } \{z,z'\} \cap N_H^{\ell+1}(y) \neq \varnothing.
\end{align*}

Denote the vertices in $C_{\ell}(y)$ that satisfy (b') and (c') by $C(y)$. For all $y,y'\in W_{\ell+1}$, since $g(y)=g(y')=\ell$, and by our choice of the partition $W_1, \ldots, W_Q$, we have that $\mathsf{ldeg}_g^{\ell}(y) = \mathsf{ldeg}_g^{\ell}(y')$.  Let
$$ k = \mathsf{ldeg}_g^{\ell}(y) \text{ for some } y \in W_{\ell+1}$$
and note that $0 \leq k \leq \Delta$. Then by our choice of $\mu$ and $\eps_{2\Delta} \leq \eps^{*}$ and from condition (b) of $(\cS_{\ell})$, we have
$$ |C(y)| \geq (1 - \Delta \eps_{2\Delta} - \Delta^2 \mu) |C_{\ell}(y)| \geq (1 - \Delta \eps_{2\Delta} - \Delta^2 \mu) \Big( \frac{\rho p}{2} \Big)^{k} C's \geq \frac{\rho^k}{2^{k+1}} p^k C' s.$$
This finishes the first part of our inductive step.

Next, we move to the second part, in which we show that distinct representatives for the system $\big( C(y)\big)_{y\in W_{\ell+1}}$ exist. For this, we use Hall's condition~\cite{diestel2017graph}, which tells us that in a bipartite graph with bipartitions $X$ and $Y$, there is a perfect matching if for every $X' \subseteq X$, it holds that $|N(X)| \geq |X|$.  Thus, in our case it is enough to show that for every $Y \subseteq W_{\ell+1}$, it holds that
$$ |Y| \leq \Big| \bigcup_{y\in Y} C(y) \Big|.$$
In case $1 \leq |Y| \leq \rho^k p^k C' s / 2^{k+1}$, the above holds due to our lower bound on each $|C(y)|$.

Now let $Y \subseteq W_{\ell+1}$ be a set with $|Y| > \rho^k p^k C' s / 2^{k+1}$. As mentioned above, for each $y\in W_{\ell+1}$, we have $\mathsf{ldeg}_g^{\ell}(y)=k$, so there is a $k$-tuple $K(y) = \{u_1(y), \ldots, u_k(y)\}=N_H(y) \setminus N_H^{\ell+1}(y)$ of neighbours of $y$ that are already embedded. Moreover, since we chose the partition $W_1, \ldots, W_Q$ in such a way that the distance in $H$ between any two vertices in the same class $W_i$ is at least $4$, we have that for every $y,y' \in W_{\ell+1}$, the sets $K(y)$ and $K(y')$ are disjoint. Therefore, the sets of already embedded vertices $\phi_{\ell}(K(y))$ and $\phi_{\ell}(K(y'))$ are disjoint too, and so $\cF_k=\{\phi_{\ell}(K(y)):y\in Y\} \subseteq \binom{V(F)}{k}$ is a family of disjoint sets of size $k$ in $V(F)$. Furthermore,
$$ C(y) \subseteq \bigcap_{v\in \phi(K(y))} N_F(v).$$
Set
$$ U = \bigcup_{y\in Y} C(y) \subseteq F_{\ell+1},$$
and observe that if $F_{\ell+1} \subseteq \mathbf{F}_{\tilde{y}}$ for some $\tilde{y} \in V(\cR)$, then $U \subseteq \mathbf{F}_{\tilde{y}}$ and
$$ \cF_k \subseteq \binom{(\bigcup^{d_{\tilde{y}}}_{i=1}\mathbf{F}_{p_i(\tilde{y})} \dot\cup \mathbf{F}_{\tilde{y}} ) \setminus U }{k}.$$
Assume for contradiction that 
$$ |U| < |Y| = |\cF_k|.$$
Since we conditioned on $\mathscr{G}:= F[\bigcup_{\tilde{n} \in N_{\cR}(\tilde{y})}\mathbf{F}_{\tilde{n}} \dot\cup \mathbf{F}_{\tilde{y}} ] \in \mathscr{C}^k_{(\mathsf{deg}_{\cR}(\tilde{y})+1)\Tilde{\Delta}C's,p}(\xi)$, and we have $|\cF_k|\leq |W_{\ell+1}|\leq s \leq \xi (\mathsf{deg}_{\cR}(\tilde{y})+1) \Tilde{\Delta} C' s$, it follows that
$$ e_{\Gamma(k,\mathscr{G})}(\cF_k, U) \leq p^k |\cF_k| |U| + 6 \xi (\mathsf{deg}_{\cR}(\tilde{y})+1)\Tilde{\Delta}C's p^k |\cF_k|.$$
From our lower bound on $|C(y)|$ above, it follows that
$$ e_{\Gamma(k,\mathscr{G})}(\cF_k, U) \geq \frac{\rho^k}{2^{k+1}} p^k C's |\cF_k|.$$
The last two inequalities together with $C' \gg \Delta, \rho^{-1},d$ imply that
$$ \Big| \bigcup_{y\in Y} C(y) \Big| = |U| \geq \Big( \frac{\rho^k}{2^{k+1}} - 6(\mathsf{deg}_{\cR}(\tilde{y})+1) \xi \Tilde{\Delta} \Big)C's \geq s \geq |W_{\ell+1}| \geq |Y|,$$
which contradicts our assumption that $|U| < |Y| = |\cF_k|$. This shows that Hall's condition $|Y| \leq |U|$ holds, as desired. Therefore, there is a system of representatives for $\big(C(y)\big)_{y \in W_{\ell+1}}$. In other words, there is an injective function $\psi: W_{\ell+1} \rightarrow \cup_{y\in W_{\ell+1}}C(y)$ such that $\psi(y) \in C(y)$ for each $y \in W_{\ell+1}$.

We can now extend $\phi_{\ell}$ to get $\phi_{\ell+1}$ and define $C_{\ell+1}(z)$ for $z \in \cup_{j=\ell+2}^{Q}$. Firstly, let
$$
\phi_{\ell+1}(w)=\begin{cases}
			\phi_{\ell}(w), & \text{if } w \in \cup_{j=1}^{\ell} W_j,\\
            \psi(w), & \text{if } w \in W_{\ell+1}.
		 \end{cases}
$$
Since all pairs of vertices in $W_{\ell+1}$ are at distance at least $4$, each $z \in \cup_{j=\ell+2}^Q W_j$ has no more than one neighbour in $W_{\ell+1}$. Thus, for each such $z \in \cup_{j=\ell+2}^Q W_j$ we can set
$$ C_{\ell+1}(z)=\begin{cases}
			C_{\ell}(z), & \text{if } N_H(z) \cap W_{\ell+1}=\varnothing,\\
            C_{\ell}(z) \cap N_F(\phi_{\ell+1}(y)), & \text{if } N_H(z) \cap W_{\ell+1} = \{y\}.
		 \end{cases} $$
We now verify that $\phi_{\ell+1}$ and $C_{\ell+1}(z)$ for each $z\in \cup_{j=\ell+2}^Q W_{j}$ satisfy $(\cS_{\ell+1})$.

From property (a) of $(\cS_{\ell})$ together with $\phi_{\ell+1}(y) \in C(y) \subseteq C_{\ell}(y)$ for every $y \in W_{\ell+1}$ and the fact that $\psi$ is injective, it follows that $\phi_{\ell+1}$ is a partial embedding of $H[\bigcup_{j=1}^{\ell+1}W_j]$ into $F[\bigcup_{j=1}^{\ell+1}F_j]$.

We now check that parts (a) and (b) of $(\cS_{\ell+1})$ hold. Fix some $z \in \bigcup_{j=\ell+2}^{Q}W_j$. In case $N_H(z) \cap W_{\ell+1} = \varnothing$, we have $C_{\ell+1}(z) = C_{\ell}(z)$ and $\mathsf{ldeg}_g^{\ell+1}(z) = \mathsf{ldeg}_g^{\ell}(z)$, so (a) and (b) of $(\cS_{\ell+1})$ are satisfied for that $z$. Now suppose $N_H(z) \cap W_{\ell+1} = \{y\}$ (recall that this is the only other possible case). Since we defined $C_{\ell+1}(z) = C_{\ell}(z) \cap N_F(\phi_{\ell+1}(y))$, property (a) of $(\cS_{\ell+1})$ holds for $z$ in this case. Furthermore, by our choice $\phi_{\ell+1}(y) \in C(y)$ and by (b'), it follows that property (b) of $(\cS_{\ell+1})$ is satisfied.

Lastly, we check property (c) of $(\cS_{\ell+1})$. Fix an edge $zz'$ of $H$ such that $z,z' \in \bigcup_{j=\ell+2}^{Q}W_j$. There are three cases to consider, based on the size of $N_H(z) \cap W_{\ell+1}$ and $N_H(z') \cap W_{\ell+1}$ (recall that each of them has size either $0$ or $1$). Suppose first $N_H(z) \cap W_{\ell+1} = \varnothing$ and $N_H(z') \cap W_{\ell+1} = \varnothing$. Then we have $\mathsf{ldeg}^{\ell}_g(z) = \mathsf{ldeg}^{\ell+1}_g(z)$ and $\mathsf{ldeg}^{\ell}_g(z') = \mathsf{ldeg}^{\ell+1}_g(z')$, as well as $C_{\ell}(z) = C_{\ell+1}(z)$ and $C_{\ell}(z') = C_{\ell}(z')$, so part (c) of $(\cS_{\ell+1})$ follows directly from part (c) of $(\cS_{\ell})$. Next, suppose $N_H(z) \cap W_{\ell+1} =\{y\}$ and $N_H(z') \cap W_{\ell+1} = \varnothing$. Then property (c') together with the definition of $C_{\ell+1}(z)$ and $C_{\ell+1}(z')$ imply part (c) of $(\cS_{\ell+1})$. Finally, suppose $N_H(z) \cap W_{\ell+1} = \{y\}$ and $N_H(z') \cap W_{\ell+1} = \{y'\}$. Then it must be the case that $y=y'$, since otherwise $y$ and $y'$ would be two distinct vertices in $W_{\ell+1}$ connected by a path of length $3$ in $H$, namely the path $yzz'y'$, and this is impossible by our choice of $W_1, \ldots, W_Q$. Then in this case also, part (c) of $(\cS_{\ell+1})$ follows from (c') and our choice of $C_{\ell+1}(z)$ and $C_{\ell+1}(z')$.

We have shown that (a), (b), and (c) of $(\cS_{\ell+1})$ hold. This finishes the induction step and the proof of Lemma~\ref{lem:KRSS_generalization}.
\end{proof}

\bibliographystyle{plain}
\bibliography{bibliography.bib}
\end{document}